\documentclass[11pt,a4paper,reqno]{amsart}

\usepackage{times} % assumes new font selection scheme installed
\usepackage{amsmath,dsfont} % assumes amsmath package installed
\usepackage{amssymb}  % assumes amsmath package installed
\usepackage{amsthm}
\usepackage{latexsym}
\usepackage{amsfonts,bbm}
\usepackage{xcolor}
\usepackage{mathtools}
\usepackage{enumerate}
\usepackage{cite}
\usepackage[foot]{amsaddr}
\usepackage{algorithm}%
\usepackage{algorithmicx}%
\usepackage{algpseudocode}%
\usepackage[
  a4paper,
  margin=3cm
]{geometry}
\usepackage{multicol}
\usepackage{enumitem}

\usepackage{tikz}
\usetikzlibrary{calc,positioning, arrows.meta, shapes.geometric}
\usepackage{xargs}
\usepackage[hidelinks]{hyperref}
\usepackage{textcomp}

\usepackage{tikz}
\usetikzlibrary{positioning, arrows.meta, shapes.geometric}
\usepackage{graphicx}
\usepackage{xcolor}

\usepackage[colorinlistoftodos,prependcaption,textsize=small]{todonotes}
\usepackage{xargs}  
\usepackage{caption}
\usepackage[labelformat=simple]{subcaption}

\newcommand{\stack}[2]{\left[ \begin{smallmatrix} #1 \\ #2 \end{smallmatrix} \right]}

\newcommand{\roundstack}[2]{\left( \begin{smallmatrix} #1 \\ #2 \end{smallmatrix} \right)}

\newtheorem{theorem}{Theorem}[section]

\newtheorem{lemma}[theorem]{Lemma}

\newtheorem{proposition}[theorem]{Proposition}
\newtheorem{definition}{Definition}

\numberwithin{equation}{section}
\allowdisplaybreaks

\newcommand{\R}{\mathbb{R}}
\newcommand{\N}{\mathbb{N}}
\newcommand{\calA}{\mathcal A}

\newcommand{\calC}{\mathcal C}

\newcommand{\calM}{\mathcal M}

\newcommand{\calS}{\mathcal S}

\newcommand{\dom}[1]{D(#1)}

\newcommand{\ran}[1]{R(#1)}
\newcommand{\zer}[1]{\operatorname{zer}(#1)}

\DeclareMathOperator{\prox}{prox}

{\left(\begin{smallmatrix}}%            begin code
{\end{smallmatrix}\right)}%             end code

{\left[\begin{smallmatrix}}%            begin code
{\end{smallmatrix}\right]}%             end code

\title[Suboptimal model predictive control as an operator splitting scheme]{Coupling optimization algorithms and monotone control systems: Suboptimal model predictive control as an operator splitting scheme}

\author{Till Preuster$^1$}\address{$^1$Junior Professorship Numerical Mathematics, Faculty of Mathematics, Chemnitz University of Technology, Germany\\ Mail: \textsc{\{till.preuster,manuel.schaller\}@math.tu-chemnitz.de}}
\author{Hannes Gernandt$^2$}\address{$^2$School of Mathematics and Natural Sciences, University of Wuppertal, Germany\\ Mail: \textsc{gernandt@uni-wuppertal.de}}
\author{Manuel Schaller$^1$}

\thanks{This work was funded by the Deutsche Forschungsgemeinschaft (DFG, German Research Foundation) – Project-ID 531152215 – CRC 1701.}

\begin{document}

\begin{abstract}
We propose a framework for suboptimal model predictive control (MPC) based on the interconnection of monotone dynamical systems, such as port-Hamiltonian systems. In contrast to classical MPC formulations, where the optimizer is treated as an instantaneous mapping, we model both the plant and the optimizer as dynamical systems and couple them through a structured interconnection. This leads to a continuous-time closed-loop formulation governed by (quasi-)monotone operators. Within this setting, we establish well-posedness of the coupled optimizer-plant dynamics and provide a unified interpretation of suboptimal MPC schemes. In particular, we reveal a direct connection between iterative optimization algorithms and dynamical control systems theory by showing that standard suboptimal MPC algorithms can be understood as time discretizations of the underlying continuous-time dynamics via operator splitting methods.
\end{abstract}

\maketitle
\smallskip
\noindent \textbf{Keywords:}  suboptimal Model Predictive Control, control by interconnection, optimal control, Lie--Trotter splitting
\smallskip
\section{Introduction}\label{sec1}

Model Predictive Control (MPC; \cite{Gruene17,RawlMayn17}) is a well-established feedback control scheme for nonlinear dynamical control systems. One reason for its success in a wide range of applications is its ability to handle nonlinear control systems and explicitly incorporate state and control constraints. In MPC, the feedback supplied to the nonlinear system under control is obtained via the solution of an optimal control problem. More precisely, the system behavior is predicted on a finite time horizon and, using a suitably designed cost functions, an optimization problem is solved. Then, a first portion in time of the optimal control is fed back into the system and the process is repeated. In this way, one obtains an optimization-based feedback controller for which a plethora of results regarding constraint satisfaction, stability and economic performance (in view of the chosen cost functional) exist. The fact that the core of the feedback computation consists of solving an optimal control problem leads to a second main aspect of the success of MPC: After designing a suitable optimal control problem offline, the online implementation of the method may be built on highly powerful optimization methods and software, hence providing a bridge to another very mature area of mathematics.

However, even when using highly specialized optimization algorithms to compute the feedback, the requirement of solving optimal control problems in real time poses significant computational challenges, in particular when dealing with applications involving strongly nonlinear models and partial differential equations. This has motivated the development of \emph{suboptimal MPC} methods, where approximate solutions are used as a feedback, while still ensuring stability and feasibility of the closed-loop system. Early results such as \cite{ScokMayn99} and \cite{pannocchia2011inherently} show that feasible suboptimal solutions, combined with appropriate terminal ingredients, are sufficient to guarantee closed-loop stability. Prominent schemes are real-time iteration (RTI) schemes~\cite{DiehBock05}, see also the recent work \cite{Zanelli21} for a viewpoint of interconnected dynamical systems, instant MPC, where only a single optimization step is performed at each sampling instant \cite{feller2013barrier,YoshInou2019}. For port-Hamiltonian (pH) systems, suboptimal MPC can be formulated as a power-preserving interconnection of two pH systems: one describing the plant and one describing the optimizer dynamics~\cite{Pham22}.

In this sense, suboptimal MPC schemes may be interpreted abstractly as the interconnection of two dynamical systems: the optimizer dynamics and the nonlinear plant to be controlled. Classical MPC formulations implicitly assume that the optimizer associated with the underlying optimal control problem can be computed instantaneously. Suboptimal MPC approaches relax this assumption by replacing the exact optimizer with an approximation obtained through a finite number of optimization steps.
While the plant itself is a physical dynamical control system, many optimization algorithms admit a natural interpretation as dynamical systems as well. This viewpoint not only opens up a wide toolkit of system theory when considering autonomous systems, but also may be used to interconnect optimization algorithms with other dynamical control systems. This interpretation of optimization methods has a long history. Early examples include Polyak's heavy-ball method \cite{polyak1964some} and the accelerated schemes introduced by Nesterov, whose continuous-time limits have been studied in \cite{muehlebach2019dynamical,su2016differential}. Another seminal contribution is the work of Lions and Mercier \cite{lions1979splitting}, which analyzed operator-splitting methods such as the Peaceman--Rachford scheme \cite{peaceman1955numerical}. We further refer to the monographs \cite{helmke2012optimization,miller2026structureanalysissynthesisfirstorder} for a broader systems-theoretic perspective on optimization dynamics.
In recent years, dynamical formulations of optimization algorithms have attracted renewed attention, particularly in the context of inertial and accelerated dynamics \cite{attouch2018fast,attouch2014dynamical,boct2015inertial}. Moreover, primal--dual optimization dynamics have been studied from a control-theoretic perspective, including asymptotic convergence properties \cite{cherukuri2016asymptotic}, exponential stability results \cite{qu2018exponential}, and Lyapunov-based analyses \cite{cherukuri2017role}.

\medskip

\textbf{Contribution}.
Motivated by these developments, this paper proposes a framework for suboptimal predictive control based on the interconnection of monotone systems; one being the optimization algorithm and the other one being the physical system to be controlled. The main idea is to model both the plant and the optimizer as dynamical systems and to couple them through a structured interconnection. We show that the resulting closed-loop system is governed by (quasi-)monotone operators and admits a well-posed dynamical formulation. Moreover, we establish a connection between MPC and suboptimal MPC schemes and operator splitting methods, in particular Lie--Trotter splittings. This perspective provides a unified interpretation of iterative optimization algorithms as time discretizations of the optimizer dynamics.

Overall, the proposed approach provides a bridge between (suboptimal) MPC methods, system theory, and optimization theory, offering new insights into the design and analysis of computationally efficient MPC schemes.

\medskip

\textbf{Outline}. This paper is structured as follows. In Section~\ref{sec:MPC} we define the Model Predictive Control algorithm and its suboptimal variant. Then, in Section~\ref{sec:interconnection} we introduce the system class that is used to both model optimization method and plant under control. Importantly, this method describes an \emph{open system} that allows for a subsequent coupling, leading to a coupled system of differential systems (in case of suboptimal MPC) or differential-algebraic system (in case of MPC), where the algebraic condition corresponds to the optimality condition. Then, in Section~\ref{sec:sampleholdMPC} we show that based on these coupled systems, the classical MPC methods are numerical splitting schemes applied to the coupled optimizer-plant dynamics. Last, in Section~\ref{sec:opt}, we broaden the scope of underlying optimization schemes, showing that proximal-gradient methods, forward-backward algorithms and Peaceman-Rachford algorithms fit into the suggested framework.

\section{Model predictive control schemes}\label{sec:MPC}
In this part, we introduce the stabilizing Model Predictive Control (MPC) schemes to be considered in this work. 
The central goal of stabilizing MPC is to obtain a  stabilizing controller for a given dynamical control system with minimal costs and while satisfying the input or state constraints.
In Subsection~\ref{subsec:mpc_algo}, we present the standard MPC algorithm, while in Subsection~\ref{subsec:subopt_MPC} we put forward its suboptimal counterpart.

Assume that we are given a continuous-time control system 
\begin{equation}\label{eq:control_system}
    \tfrac{\mathrm{d}}{\mathrm{d}t} x_p(t) = f(x_p(t)) + g(u_p(t)), \qquad x_p(0)=x_{p,0} , \qquad t \geq 0,
\end{equation}
with \textit{vector field} $f:\R^n \to \R^n$, \emph{input map} $g: \R^m  \to \R^n$ and \textit{control function} $u_p: [0, \infty) \to \R^m$ and \textit{state trajectory} $x_p: [0, \infty) \to \R^n$. 

A central aim in numerous applications is to stabilize the dynamical system \eqref{eq:control_system} towards a desirable reference state $x_\mathrm{ref}\in \R^n$. More precisely, given a set of admissible control values $U\subset \R^m$ the goal is to find a feedback law $\mu: \R^n \to U$ such that the solution $x(t)$ of the closed-loop system 
\begin{equation}\label{eq:control_system_cl}
    \tfrac{\mathrm{d}}{\mathrm{d}t} x_p(t) = f(x_p(t)) + g(\mu(x_p(t))), \qquad x_p(0)=x_{p,0} , \qquad t \geq 0,
\end{equation}
satisfies 
\begin{equation*}
    \lim_{t \to \infty} x_p(t) = x_\mathrm{ref}.
\end{equation*}
Two major challenges arise in the stabilization of nonlinear control systems. The first challenge is the nonlinear nature of the dynamics. For certain structured classes of systems, several control design methods have been developed to address this issue, including funnel control \cite{berger2021funnel}, passivity-based control \cite{schaft1996l2, ortega2002interconnection}, and approaches based on control Lyapunov functions \cite{sontag1989universal, sontag2013mathematical}. These methods typically rely on the explicit construction of a feedback law that can subsequently be implemented in closed loop.
A second challenge arises from the presence of state and control constraints, which can substantially complicate the design of feedback controllers. In this context, we refer to the recent work \cite{preuster2026optimization}, where it was shown that, for monotone systems such as those considered in this work, these difficulties can be mitigated.

Model Predictive Control (MPC), as introduced below, addresses both challenges, by defining the feedback implicitly through the solution of a finite-horizon optimal control problem that is solved repeatedly online.

\subsection{MPC-algorithm}\label{subsec:mpc_algo}
The basic stabilizing MPC procedure is summarized in  Algorithm~\ref{alg:MPC}. Therein, in each iteration, the following \textit{optimal control problem} is solved
\begin{align}\label{eq:MPC_OCP}
    \begin{aligned}
\min_{(x,u)\in L^2(0,t_f; \R^n) \times L^2(0,t_f; \R^m)} \quad
& \int_0^{t_f} \left(
\tfrac{1}{2} \| x(\tau) - x_\mathrm{ref} \|_{\R^n}^2
+ \tfrac{\alpha}{2} \| u(\tau) - u_\mathrm{ref} \|_{\R^m}^2
\right) \,\mathrm{d}\tau \\
\text{s.t.} \quad
& \dot x(\tau) = \tilde{f}(x(t)) + \tilde{g}(u(t)), \quad x(0) = x_0, \\
& u \in F \subset L^2(0,t_f;\R^m)
\end{aligned}
\end{align}
with $\tilde{f}: \R^n \to \R^n, \tilde{g}: \R^m \to \R^n$ and some convex and closed control constraint set $F \subset L^2(0,t_f;\R^n)$, where $L^2(0,t_f;\R^n)$ is the Lebesgue space of measurable and square integrable functions $f:[0,t_f]\rightarrow\mathbb{R}^n$.
 We set a sampling time $h>0$ and we assume that for each bounded and Lebesgue measurable control input $u:\R\rightarrow\R^m$, there exists 
a unique solution $x(t,x_0,u)$ of \eqref{eq:control_system} with control $u$ at time $t$.
\begin{algorithm}
\caption{Basic MPC algorithm}
\label{alg:MPC}
\begin{algorithmic}[1]
\Statex \textbf{Given}: Initial plant state $x_{p,0}\in \R^n$, sampling time $h>0$.
   \For{each $i = 0,1,2,\dots$}
    \State Solve optimal control problem \eqref{eq:MPC_OCP} with $x_0=x_{p,i}$; denote the obtained optimal control by $u^\star:[0,t_f] \to \R^m$.
    \State Set $u_p(t) \equiv u^\star(0)$ for $t\in [0,h]$.
    \State Set $x_{p,i+1} = x_p(h;x_{p,i},u_p)$ where $x_p$ solves \eqref{eq:control_system}.
\EndFor
\end{algorithmic}
\end{algorithm}
Model Predictive Control is by now a well-established methodology in both control theory and engineering applications. Comprehensive introductions to MPC can be found in the textbooks \cite{Gruene17,RawlMayn17,kouvaritakis2016model}. From an engineering perspective, MPC has proven successful in applications requiring the treatment of constraints and multivariable dynamics; we refer to the survey article \cite{schwenzer2021review}.
On the theoretical side, the mathematical analysis of MPC has been studied extensively over the past decades. Early foundational results on stability and performance guarantees for constrained MPC were established in \cite{mayne2000constrained}. Since then, the framework has been extended in several directions, including dynamic operation \cite{kohler2024analysis}, path-following problems \cite{faulwasser2015nonlinear}, and economic MPC formulations \cite{grune2013economic,angeli2011average}. Moreover, MPC schemes for homogeneous control systems, which arise in robotics applications, have been investigated in \cite{worthmann2015model,coron2020model}. An overview of developments in economic MPC is provided in \cite{faulwasser2018economic}.
More recently, considerable attention has been devoted to data-driven and learning-based MPC approaches, where system models are partially or fully inferred from data. We refer to the overview articles \cite{berberich2025overview,hewing2020learning,strasser2026overview} as well as the recent contribution \cite{bold2024data}.

\subsection{Suboptimal MPC}\label{subsec:suboptmpc_intro}
In practice, computing the optimal solution of the optimal control problem in Step~3 of Algorithm~\ref{alg:MPC} within the available time is often challenging, making it difficult to neglect the resulting suboptimality. This motivates the use of suboptimal solutions. A common approach is to employ an iterative solver for the optimal control problem \eqref{eq:MPC_OCP} and terminate the iteration prematurely. In this way, the basic MPC algorithm is modified as follows.
\begin{algorithm}
\caption{Suboptimal MPC algorithm.}
\label{alg:subopt_MPC}
\begin{algorithmic}[1]
\Statex \textbf{Given}: Initial plant state $x_{p,0}\in \R^n$, sampling time $h>0$, truncation index $j\in \N$.
    \For{each $i = 0,1,2,\dots$}
    \State Approximate optimal control of \eqref{eq:MPC_OCP} with $x_0=x_{p,i}$ by an iterative optimization algorithm; denote the obtained sequence by $(u^l)_{l \in \N}$,  $u^l:[0,t_f] \to \R^m$.
    \State Terminate after $j \in \mathbb{N}$ iterations and set $u_p(t) \equiv u^j(0)$ for $t\in [0,h]$.
    \State Set $x_{p,i+1} = x_p(h;x_{p,i},u_p)$ where $x_p$ solves \eqref{eq:control_system}.
    \EndFor
\end{algorithmic}
\end{algorithm}
A fundamental observation in suboptimal MPC is that optimality can be relaxed while preserving stability. Early results such as \cite{ScokMayn99} and \cite{pannocchia2011inherently} show that feasible suboptimal solutions, combined with appropriate terminal ingredients, are sufficient to guarantee closed-loop stability.

A major line of research focuses on reducing computational complexity. In particular, real-time iteration (RTI) schemes~\cite{DiehBock05} compute the control input by performing a single Newton--SQP step at each sampling instant. These approaches can also be interpreted within continuation methods, see \cite[Section 8.9.2]{RawlMayn17}, or initial-value embedding techniques, see \cite[p.~325ff]{Gruene17}. A common assumption in many of these works is the absence of inequality constraints, see \cite{diehl2007stabilizing}.

A significant extension is provided in \cite{Zanelli21}, where a Lyapunov-based analysis of the coupled optimizer-plant dynamics is developed. This work establishes asymptotic stability in the presence of inequality constraints for sufficiently small sampling times. The framework relies on structural assumptions on both the plant and the optimizer, including Lyapunov-certified stabilizability, Lipschitz continuity properties, and a strict contraction of the optimizer dynamics with respect to the optimal solution.

Further developments include approaches that combine explicit MPC with online optimization~\cite{zeilinger2011real}, as well as schemes ensuring stability and performance improvement without terminal constraints~\cite{graichen2010stability}. Comprehensive discussions of suboptimal (or nonoptimal) MPC can be found in \cite[Section 2.7]{RawlMayn17} and \cite[Section 10.6]{Gruene17}.

Extensions to infinite-dimensional systems have also been investigated. In particular, \cite[Chapter 4]{schrot2025efficient} considers suboptimal MPC for a class of parabolic PDEs, building on the framework of \cite{Zanelli21}. However, additional challenges arise in this setting, such as the lack of compactness of sublevel sets and the difficulty of constructing suitable Lyapunov functionals.

Besides stability, suboptimality of these methods is an active topic of research, see \cite{kara2025} for a finite-time transient suboptimality analysis.

A particular subclass of suboptimal MPC is \emph{instant model predictive control}, where only a single optimization step is performed at each sampling instant. A first contribution in this direction is \cite{feller2013barrier}, which proposes a continuous-time optimization algorithm for linear systems. In \cite{YoshInou2019}, this approach is further analyzed for linear dissipative systems and discrete-time optimal control problems. The closed-loop stability is established by exploiting a dissipative interconnection between the plant and optimizer dynamics.

\section{Optimization-based control as interconnection of dynamical and algebraic systems}\label{sec:interconnection}
In this section, we derive a scheme that treats optimization-based control as the interconnection of a dynamical system (the \textit{plant}) and an optimization-induced dynamical system (the \textit{optimizer dynamics}), as illustrated in Figure~\ref{fig:subopt}. The left box represents the system to be stabilized, while the right box corresponds to the virtual dynamics of the optimizer, e.g., arising as a gradient flow for the optimality system. Here, the optimizer dynamics is understood as a second dynamical control system where the initial value of the underlying dynamical control system is treated as an input and the corresponding output is a surrogate for the optimal control. 

\begin{figure}[htb]
\begin{center}
    \scalebox{0.9}{\begin{tikzpicture}[
  >=Latex,
  font=\small,
  block/.style={
    draw,
    thick,
    fill=gray!15,
    rounded corners=1pt,
    inner sep=.1cm,
    align=left,
    text width=5.2cm
  },
  wire/.style={thick, -Latex},
  lab/.style={font=\small, inner sep=1pt}
]

% --- Blocks ---
\node[block] (plant) {
\vspace*{-.3cm}
\begin{align*}
\dot x_p(t) &= -M_p(x_p(t)) + B_p u_p(t)\\
y_p(t) &= x_p(t)
\end{align*}
};

\node[block, right=2cm of plant, yshift=-1.5cm] (opt) {
\vspace*{-.2cm}
\begin{align*}
\varepsilon \dot w(t) &\in -M_{\mathrm{opt}}(w(t)) + \mathcal{B}u_\mathrm{opt}(t) \\
y_\mathrm{opt}(t) &= \mathcal{B}^* w(t)
\end{align*}
};

% --- Coordinates for clean routing ---
\coordinate (plantE) at ($(plant.east)+(0.2,0)$);
\coordinate (optN)   at ($(opt.north)+(0,0.2)$);
\coordinate (optW)   at ($(opt.west)+(-0.2,0)$);
\coordinate (plantS) at ($(plant.south)+(0,-0.2)$);

% --- y_p -> optimizer input (horizontal, then vertical) ---
\draw[wire]
(plant.east) -| node[lab, above]
{$u_\mathrm{opt}(t) = y_p(t)$}
(opt.north);

% --- optimizer output -> plant input (horizontal, then vertical) ---
\draw[wire]
(opt.west) -| node[lab, below]
{$u_p(t)= P_F(-\tfrac{1}{\alpha}B^*  y_\mathrm{opt}(t)+u_\mathrm{ref})$}
(plant.south);

\end{tikzpicture}}
\end{center}
\caption{Proposed MPC-type control-by-interconnection scheme. Although the optimizer dynamics is a maximally  monotone control system, the coupling is not structure preserving, i.e., the resulting closed loop system is solely driven by a quasi-m-monotone operator. }
\label{fig:subopt}
\vspace*{-.2cm}
\end{figure}

Here, as sketched in Figure~\ref{fig:subopt}, our aim is to model both the plant and the optimizer dynamics by the same system class and to perform a suitable interconnection in the sense of control systems. As we consider continuous-time formulations here, the optimizer dynamics evolve in a function space of time-dependent functions. Notably, the (autonomous) dynamics will be described by monotone  operators  ($M_p$ and $M_\mathrm{opt}$ in Figure~\ref{fig:subopt}) that are particularly well understood in operator theory and appear in a wide range of applications. This consideration paves the way towards suboptimal MPC of partial differential equations, a topic that is widely unexplored. While the plant model in Figure~\ref{fig:subopt} is formulated with single-valued operators, the optimality system is given by a set-valued operator. This will be particularly useful to allow for set-valued subdifferentials stemming from inequality constrained problems.

This section is organized as follows. Subsection~\ref{subsec:quasi_monotone} collects operator-theoretic preliminaries and a general well-posedness result for quasi-m-monotone evolution equations. Readers primarily interested in the control-theoretic developments may skip this subsection and refer back to it as needed. In Subsection~\ref{subsec:OS}, we formulate a constrained optimal control problem, derive the associated optimality system, and show that the resulting optimizer dynamics (right box in Figure~\ref{fig:subopt}) defines a maximally monotone control system. In Subsection~\ref{subsec:coupling}, we interconnect this system with the plant (left box in Figure~\ref{fig:subopt}) in an MPC fashion and establish well-posedness of the resulting closed loop using the results from Subsection~\ref{subsec:quasi_monotone}. Finally, Subsection~\ref{subsec:instant} discusses the limiting case of infinitely fast optimizer dynamics, which recovers a static MPC feedback law.

\subsection{Dynamical systems governed by quasi-m-monotone operators}\label{subsec:quasi_monotone}
In the following, we let $(X,\langle \cdot, \cdot \rangle_X)$ be a Hilbert space. This space will be the state space for the systems under consideration. For the plant dynamics (left box of Figure~\ref{fig:subopt}) this space is $X=\R^n$ while for the optimizer (right box of Figure~\ref{fig:subopt}), this space will be a consist of square integrable functions on $[0,T]$, that is, $X= L^2([0,T];\R^k)$ for suitably chosen $k\in \N$.

Let $M \subset X \times X$ be a subset of the product space $X \times X$. Using the same symbol, we associate with $M$ a \emph{set-valued operator} $M: X \rightrightarrows X$ where 
\begin{equation*}
     M(x) = \left\{y \in X \, \middle| \, (x,y) \in M \right\}
\end{equation*}
for all $x$ in the \emph{domain} $\dom M=\left\{x \in X \, \middle| \, M(x)\neq \emptyset \right\}$. We denote the set of zeros of a (possibly nonlinear) operator $M: X  \rightrightarrows X$ by
\[
\operatorname{zer} M = \{ x \in D(M) : 0 \in M(x)  \}.
\] The \emph{range} of $M$ is defined by $\ran M = \cup_{x \in \dom M} M(x)$.

We now define the central structural property of the governing operators following \cite[Ch. 3.1]{Barb10} and \cite{BausComb2011}.

\begin{definition}\label{def:monotonicity}
    A possibly nonlinear operator $M:X\rightrightarrows X$ is called 
\begin{enumerate}[label=(\roman*)]
    \item \emph{monotone} if
\begin{equation}\label{eq:mon}
\langle v - w, x - z \rangle_X \geq 0 \qquad \text{for all} \qquad x,z \in \dom{M}, v \in M(x), w \in M(z).
\end{equation}
\item \textit{maximally monotone} (abbreviated \emph{m-monotone}) if it is monotone and if it does not admit a proper monotone extension\footnote{Equivalently, $\operatorname{ran}(\lambda I + M) = X$ for some (and hence for all) $\lambda > 0$, see \cite{BausComb2011}.}. 
\item \emph{$\omega$-quasi-monotone} if there is $\omega \in \R$ such that $M+\omega I$ is monotone.
\item \emph{$\omega$-quasi-m-monotone} if there is $\omega \in \R$ such that $M+\omega I$ is m-monotone.
\end{enumerate}
\end{definition}

\noindent To denote the monotonicity, that is, \eqref{eq:mon} of the above definition, we will sometimes abbreviate
\begin{equation*}
    \langle M(x) - M(z), x-z\rangle_X \geq 0 \qquad \text{for all} \qquad x,z \in \dom{M}.
\end{equation*}
A prototypical example of a set-valued maximally monotone operator is the subdifferential $\partial g: X \rightrightarrows X$ associated with a proper, convex, and lower semicontinuous function $g: X \to \R \cup \{+\infty\}$. For $x \in X$, the subdifferential is defined by
\begin{equation*}
    \partial g (x) \coloneqq \left\lbrace y \in X \, \middle| \, \langle y, z-x \rangle_X \leq F(z)-F(x) \, \forall \, z \in X \right\rbrace.
\end{equation*}

\noindent We now consider the generalized homogeneous Cauchy problem 
\begin{align}\label{eq:homo_CP}
 \tfrac{\mathrm{d}}{\mathrm{d}t}x(t) + M(x(t)) + F(x(t)) \ni 0 \quad t \geq 0, \qquad x(0)=x_0
\end{align}
where $M$ is maximally monotone and $F:X\to X$. Let $x_0 \in \dom{M}$. Then, a function \(x : [0,\infty) \to X\) is a \textit{strong solution} of \eqref{eq:homo_CP} on \([0,\infty)\) if and only if $x(0)=x_0$, \(x\) is Lipschitz continuous on compact subsets of \([0,\infty)\), \(x\) is differentiable almost everywhere on \([0,\infty)\), and the differential inclusion of \eqref{eq:homo_CP} is satisfied almost everywhere, \cite{crandall1971generation}. 

We have the following generation theorem providing existence of strong solutions to \eqref{eq:homo_CP} assuming that the perturbation $F$ is globally Lipschitz. In our particular application to suboptimal MPC schemes (Figure~\ref{fig:subopt}), these perturbations arise due to the interconnection of two monotone systems.
\begin{theorem}\label{thm:homogeneous_Cauchy}
Let $M: X \rightrightarrows X$ be a maximally monotone operator and let $F: X \to X$ be globally Lipschitz continuous, that is, there is $L>0$ such that
\begin{equation*}
    \|F(x)-F(z)\|_X \leq L\|x-z\|_X \quad \mathrm{for\ all}\ x,z\in X.
\end{equation*}
Then, for every $x_0\in \dom{M}$ there exists a unique strong solution of~\eqref{eq:homo_CP}. In particular, the operator $M+F$ is the \textit{infinitesimal generator} of the semigroup given by the exponential formula
\begin{equation}
    S_{M+F}(t)x_0 = \lim_{n \to \infty} (I+\tfrac{t}{n}(M+F))^{-n}x_0.
\end{equation}
Moreover, the semigroup $S_{M+F}$ is $L$-quasi-contractive, i.e.,
\begin{equation}\label{eq:contraction_semigroup}
    \|  S_{M+F}(t)x- S_{M+F}(t)y\|_X \leq e^{Lt} \| x-y\|_X 
\end{equation}
for all $t \geq 0$, $x,y \in \overline{\dom{M}}$.
\end{theorem}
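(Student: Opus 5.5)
The plan is to reduce the statement to the classical Crandall--Liggett and Brezis theory by observing that $A := M + F + L I$ is maximally monotone. Monotonicity is immediate: for $x,z\in\dom M$, $v\in M(x)$, $w\in M(z)$, the Cauchy--Schwarz inequality and the Lipschitz bound give
\begin{equation*}
\langle v + F(x) + Lx - w - F(z) - Lz, x-z\rangle_X \geq \langle v-w,x-z\rangle_X - L\|x-z\|_X^2 + L\|x-z\|_X^2 \geq 0 .
\end{equation*}
Hence $M+F$ is $L$-quasi-monotone. For maximality I would use the range characterization: $M+F$ is $L$-quasi-m-monotone if $\ran(I+\lambda(M+F)) = X$ for all small $\lambda>0$ (say $\lambda L<1$). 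To solve $x + \lambda M(x) + \lambda F(x) \ni y$, I would write it as the fixed-point problem $x = J_\lambda(y - \lambda F(x))$ with the resolvent $J_\lambda = (I+\lambda M)^{-1}$. This resolvent is single-valued, everywhere defined and nonexpansive because $M$ is maximally monotone. The map $x\mapsto J_\lambda(y-\lambda F(x))$ is then a contraction with constant $\lambda L<1$, so Banach's fixed point theorem yields a unique solution. The same argument shows that the resolvent $(I+\lambda(M+F))^{-1}$ is Lipschitz with constant $(1-\lambda L)^{-1}$.

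With this in hand, I invoke the Crandall--Liggett generation theorem \cite{crandall1971generation} for the quasi-m-accretive operator $M+F$. It gives that the exponential formula $S_{M+F}(t)x_0 = \lim_{n}(I+\tfrac tn(M+F))^{-n}x_0$ converges (uniformly on compact time intervals) for $x_0\in\overline{\dom M}$. It also defines a strongly continuous semigroup on $\overline{\dom M}$ satisfying the quasi-contraction estimate \eqref{eq:contraction_semigroup}. I would verify this estimate directly from the resolvent bound, via $\|(I+\tfrac tn(M+F))^{-n}x - (I+\tfrac tn(M+F))^{-n}y\|_X \le (1-\tfrac{tL}{n})^{-n}\|x-y\|_X \to e^{Lt}\|x-y\|_X$.

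The main obstacle is showing that for $x_0\in\dom M$ this mild (limit) solution is actually a strong solution, i.e.\ Lipschitz, a.e.\ differentiable, and satisfying the inclusion a.e. In general Banach spaces this can fail, but in the Hilbert space $X$ it holds: Hilbert spaces are reflexive, hence have the Radon--Nikodym property. The first step is to estimate $\|S(t+s)x_0 - S(t)x_0\|_X \le e^{Lt}\|S(s)x_0-x_0\|_X$ together with $\|S(s)x_0-x_0\|_X\le s\,e^{Ls}\,|(M+F)x_0|$, where $|\cdot|$ is the minimal-norm element. This gives local Lipschitz continuity. By the Radon--Nikodym property, $x$ is then a.e.\ differentiable. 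Passing to the limit in the implicit Euler scheme, using the demiclosedness of the maximally monotone operator $M+F+LI$ (cf.\ \cite[Ch.~4]{Barb10}), shows that the differential inclusion holds a.e.

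For uniqueness, I take two strong solutions $x,z$ with the same initial value. Then $\tfrac{d}{dt}\tfrac12\|x-z\|_X^2 \le L\|x-z\|_X^2$ a.e. by quasi-monotonicity, and Gronwall's lemma yields $x=z$. The same computation also reconfirms \eqref{eq:contraction_semigroup} for initial data in $\dom M$, which extends to $\overline{\dom M}$ by density and continuity.
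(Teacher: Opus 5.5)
Your proof is correct and has the same skeleton as the paper's: show that $M+F$ is $L$-quasi-m-monotone, then let the Crandall--Liggett/Brezis generation theory do the rest. Your quasi-monotonicity computation is the paper's.

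\textbf{Maximality.} This is where you differ. The paper gets maximality of $M+(F+LI)$ in one line from the perturbation result \cite[Cor. 25.5]{BausComb2011}: a maximally monotone operator plus a continuous, monotone, everywhere defined operator is maximally monotone. You instead verify the range condition by hand. You rewrite $y\in x+\lambda M(x)+\lambda F(x)$ as the fixed-point equation $x=J_\lambda(y-\lambda F(x))$ and apply Banach's theorem for $\lambda L<1$.
\begin{itemize}
\item Your route is elementary and self-contained: it needs only Minty's theorem and the Lipschitz bound, not a general sum theorem.
\item It also gives the explicit resolvent bound $(1-\lambda L)^{-1}$, from which \eqref{eq:contraction_semigroup} follows directly through the exponential formula.
\item The paper's route is shorter and more general. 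It still works if $F$ is merely continuous with $F+LI$ monotone, where your contraction argument would no longer apply.
\end{itemize}
To connect your range condition with the paper's definition of quasi-m-monotonicity, note that
$I+\mu(M+F+LI)=(1+\mu L)\bigl(I+\tfrac{\mu}{1+\mu L}(M+F)\bigr)$ with $\tfrac{\mu L}{1+\mu L}<1$. So your condition is exactly Minty's criterion for $M+F+LI$.

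\textbf{Remaining claims.} The paper simply cites \cite[Thm. 2]{crandall1971generation} and \cite[Prop. 4.2]{Barb10} for existence, the exponential formula and quasi-contractivity. You sketch what those citations contain. In particular, you correctly identify the genuinely Hilbert-space ingredient: upgrading the mild solution to a strong solution for $x_0\in\dom{M}$, via a.e.\ differentiability of Lipschitz functions and demiclosedness. You also make the Gronwall uniqueness argument explicit.
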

\begin{proof}
    We show that $M+F$ is $L$-quasi-m-monotone. Then, the claim follows directly from \cite[Thm. 2]{crandall1971generation} and \cite[Prop. 4.2]{Barb10}. To this end, as $F$ is globally Lipschitz, we compute
\begin{equation*}
    \langle F(x)-F(z), x-z \rangle_X \geq -\|F(x)-F(z)\|_X\,\|x-z\|_X \geq -L\|x-z\|_X^2
\end{equation*}
for all $x,z \in X$. Consequently,
\begin{equation*}
    \langle (M+F)(x)-(M+F)(z), x-z \rangle_X \geq -L\|x-z\|_X^2,
\end{equation*}
that is, $M+F$ is $L$-quasi-monotone.
Equivalently, $M+F+LI$ is monotone. The maximality of $M+(F+LI)$ follows from the fact that continuous, monotone and everywhere defined perturbations of maximally monotone operators are again maximally monotone, \cite[Cor. 25.5]{BausComb2011}.
\end{proof}

Last, we define the system class to model plant and optimizer dynamics. As sketched in Figure~\ref{fig:subopt}, the plant dynamics will indeed be governed by a single-valued operator, while for the optimizer dynamics we allow for set-valued operators.
\begin{definition}\label{def:monosys}
Let $X$ and $U$ be Hilbert spaces. Let $M:X\rightrightarrows X$ be a maximally monotone operator, and let $B \in L(U,X)$. Then we call
\begin{subequations}
    \label{eq:monotone_phs_def}
    \begin{align}
    \tfrac{\mathrm{d}}{\mathrm{d}t} x(t) &\in -M(x(t)) + Bu(t), \qquad x(0)=x_0,\label{eq:monotone_phs_def:state}\\
    y(t) &= B^* x(t) \label{eq:monotone_phs_def:output}
\end{align}
\end{subequations}
a \textit{maximally monotone control system}, abbreviated by $(M,B)$.
\end{definition}

\subsection{Optimality system as monotone operator equation}\label{subsec:OS}
In this subsection, we derive dynamical systems arising from gradient-type flows in optimal control. In this context, we consider the \textit{optimal control problem} 
\begin{align}\label{eq:oc_with_dynamics}
\begin{aligned}
\min_{(x,u)\in L^2(0,t_f; \R^n) \times L^2(0,t_f; \R^m)} \quad
& \int_0^{t_f} \left(
\tfrac{1}{2} \| x(\tau) - x_\mathrm{ref} \|_{\R^n}^2
+ \tfrac{\alpha}{2} \| u(\tau) - u_\mathrm{ref} \|_{\R^m}^2
\right) \,\mathrm{d}\tau \\
\text{s.t.} \quad
& \dot{x}(\tau) = Ax(\tau) + Bu(\tau), \quad x(0) = x_0, \\
& u \in F
\end{aligned}
\end{align}
with finite time horizon $t_f>0$, controlled equilibrium\footnote{We say $(x_\mathrm{ref}, u_\mathrm{ref})$ is a \textit{controlled equilibrium} of \eqref{eq:monotone_phs_def} if $-M(x_\mathrm{ref})+Bu_\mathrm{ref} = 0$. } $(x_\mathrm{ref}, u_\mathrm{ref})$ of $(M_p,B_p)$, regularization parameter $\alpha > 0$, $A \in \R^{n \times n}$, $B \in \R^{m \times n}$ and convex and closed control constraint set $F \subset L^2(0,t_f; \R^m)$ with $ u_\mathrm{ref} \in F$.

\medskip

To further analyze the optimal control problem, we reformulate \eqref{eq:oc_with_dynamics} as a constrained optimization problem in Hilbert spaces, using the linear operator 
\begin{equation*}
    \calC: L^2(0,t_f; \R^n) \times L^2(0,t_f; \R^m) \supset \dom{\calC} \to   L^2(0,t_f; \R^n)  \times \R^n
\end{equation*}
with 
\begin{equation}\label{eq:defC}
    \calC \begin{bmatrix}
        x \\ u
    \end{bmatrix} = \begin{bmatrix}
        \tfrac{\mathrm{d}}{\mathrm{d}\tau}x-Ax-Bu \\ x(0)
    \end{bmatrix}, \quad \dom{\calC} = H^1(0,t_f; \R^n) \times L^2(0,t_f; \R^m),
\end{equation}
where $H^1(0,t_f; \R^n) $ is the space of functions in $L^2(0,t_f; \R^n)$ whose weak derivative is also in $L^2(0,t_f; \R^n)$. This operator corresponding to the constraint has the particular structure that the part acting on the state is surjective. More precisely, the operator
\begin{align*}
    \mathcal{A} x = \begin{bmatrix}
        \tfrac{\mathrm{d}}{\mathrm{d}\tau}x-Ax \\ x(0)
    \end{bmatrix}, \quad  \calA : L^2(0,t_f;\R^n) \supset \dom{\calA} \coloneqq H^1(0,t_f; \R^n) \to L^2(0,t_f;\R^n)\times \R^n
\end{align*}
is surjective. In particular, for $(f,x_0)\in L^2(0,t_f;\R^n)\times \R^n$,
\begin{align*}
    \calA x = \begin{bmatrix}
        f\\
        x_0
    \end{bmatrix}  \quad \mathrm{if\ and\ only\ if\ } \quad x(t) = e^{tA}x_0 + \int_0^t e^{(t-s)A} f(s)\,\mathrm{d}s.
\end{align*}
This implies that also $\calC$ is surjective and hence has closed range.

We abbreviate the \textit{cost function} by $J: L^2(0,t_f; \R^n) \times L^2(0,t_f; \R^m) \to \R$ with
\begin{equation}
    J(x,u) = \int_0^{t_f} \left(
\tfrac{1}{2} \| x(\tau) - x_\mathrm{ref} \|_{\R^n}^2
+ \tfrac{\alpha}{2} \| u(\tau) - u_\mathrm{ref} \|_{\R^m}^2
\right) \,\mathrm{d}\tau
\end{equation}
To simplify notation we introduce the
\begin{equation*}
    \textit{primal variable } z=(x,u) \qquad \text{and} \qquad \textit{dual variable }  p=(\lambda, \lambda_0).
\end{equation*}
The proof of the subsequent theorem is given in \cite[Thm. 3.1]{preuster2026optimization}. Here, we denote by  $i_K: Y \to \R \cup \{+\infty\}$ the \textit{indicator function} of some subset $K$ of the Hilbert space $Y$. 
 \begin{equation*}
     i_K(y)\coloneqq \renewcommand{\arraystretch}{0.1}
 \begin{cases}
 0, & \text{if } y \in K,\\
 +\infty, & \text{otherwise}.
 \end{cases}
 \end{equation*}
\begin{theorem}
    There exists a unique optimal solution $z^\star = (x^\star, u^\star) \in \dom{\calC}$ of \eqref{eq:oc_with_dynamics}. Moreover, there exists a $p^\star \in \dom{\calC^*}$ such that 
\begin{equation}\label{eq:os}
    0 \in \begin{bmatrix}
        \stack{x^\star- x_\mathrm{ref}}{\alpha(u^\star- u_\mathrm{ref})} + \calC^* p^\star \vphantom{ \stack{ \{0\}}{\partial_{i_F}(u^\star)}} \\ -\calC  \stack{x^\star}{u^\star}+\stack{0}{x_0}
    \end{bmatrix} + \begin{bmatrix}
        \stack{ \{0\}}{\partial_{i_F}(u^\star)} \\ \{0\} \vphantom{\stack{x^\star}{u^\star}}
    \end{bmatrix} \eqqcolon \calA_{x_0} (z^\star, p^\star) + \calS  (z^\star, p^\star).
\end{equation}
\end{theorem}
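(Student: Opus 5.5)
We view \eqref{eq:oc_with_dynamics} as the abstract convex problem of minimizing $\Phi(z) \coloneqq J(z) + i_{F}(u) + i_{\{\calC z = (0,x_0)\}}(z)$ over $z=(x,u)\in L^2(0,t_f;\R^n)\times L^2(0,t_f;\R^m)$. The plan is to prove existence and uniqueness by the direct method, and then obtain the multiplier $p^\star$ from a subdifferential sum rule. The sum rule applies because $\calC$ is surjective and $u_\mathrm{ref}\in F$ gives a feasible point.

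\emph{Existence and uniqueness.} Every $u\in L^2$ determines a unique state $x=x(u)$ by the variation of constants formula for $\calA$. This map $u\mapsto x(u)$ is affine and bounded from $L^2$ into $H^1\hookrightarrow L^2$. Hence the reduced cost $\hat J(u)=J(x(u),u)$ is continuous and convex. It is also $\alpha$-strongly convex, because of the term $\tfrac{\alpha}{2}\|u-u_\mathrm{ref}\|^2$. Since $F$ is closed, convex and nonempty, $\hat J+i_F$ is proper, lower semicontinuous and strongly convex. It therefore has a unique minimizer $u^\star$, and we set $x^\star=x(u^\star)\in H^1$. Thus $z^\star\in\dom{\calC}$ is the unique solution.

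\emph{Optimality system.} First we check that $\calC$ is closed and densely defined. Closedness holds because $\tfrac{\mathrm{d}}{\mathrm{d}\tau}$ on $H^1$ is closed and the trace $x\mapsto x(0)$ is continuous in the graph norm. Hence $\calC^*$ exists, and since $\calC$ is surjective, $\calC^*$ has closed range equal to $(\ker\calC)^\perp$.

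Write $K=\{z\in\dom{\calC}:\calC z=(0,x_0)\}=z^\star+\ker\calC$. This is a closed affine subspace, and its normal cone is $N_K(z)=(\ker\calC)^\perp=\operatorname{ran}\calC^*$. Fermat's rule gives $0\in\partial(J+i_{F}\circ\pi_u+i_K)(z^\star)$, where $\pi_u$ denotes the projection onto the control component. We then need
\[
\partial(J+i_F\circ\pi_u+i_K)=\nabla J+\partial(i_F\circ\pi_u)+N_K .
\]
The first sum is harmless because $J$ is continuous and Fréchet differentiable everywhere. The remaining step, splitting $\partial(i_F\circ\pi_u+i_K)$, is the main obstacle.

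To handle it, I would use the Attouch--Brézis qualification condition $\bigcup_{\lambda>0}\lambda(\dom{i_F\circ\pi_u}-K)$ being a closed subspace. For any $(x,u)$ we can write it as a difference of a point of $L^2\times F$ and a point of $K$: choose $\tilde u\in F$ arbitrary and $k=(x(\tilde u-u),\,\tilde u-u)$, adjusted so that $k\in K$. This works because the state equation can be solved for any control. Consequently the set in the condition is the whole space, and \cite[Thm.~16.47 / Cor.~16.50]{BausComb2011} yields the sum rule.

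Hence there exist $\xi\in\partial i_F(u^\star)$ and $n\in\operatorname{ran}\calC^*$ with
\[
0=\stack{x^\star-x_\mathrm{ref}}{\alpha(u^\star-u_\mathrm{ref})}+\stack{0}{\xi}+n .
\]
Choosing $p^\star\in\dom{\calC^*}$ with $-\calC^*p^\star=n$ (the sign is absorbed since $\operatorname{ran}\calC^*$ is a subspace) gives the first block row of \eqref{eq:os}. The second block row, $-\calC z^\star+(0,x_0)=0$, is just feasibility of $z^\star$.

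A possible alternative for the hardest step is to apply the sum rule to the reduced problem in $u$ alone. The adjoint state $\lambda$ is then defined as the solution of the backward equation $-\dot\lambda-A^\top\lambda=x^\star-x_\mathrm{ref}$ with $\lambda(t_f)=0$, and $\lambda_0=\lambda(0)$. This gives $p^\star=(\lambda,\lambda_0)$ explicitly, and one verifies $p^\star\in\dom{\calC^*}$ by integrating by parts.
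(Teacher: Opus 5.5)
Your proof is correct. The paper gives no proof here but cites \cite[Thm.~3.1]{preuster2026optimization}, flagging only the surjectivity (hence closed range) of $\calC$, and your route rests on exactly that fact: strong convexity for existence and uniqueness, then Fermat's rule with the Attouch--Br\'ezis sum rule, whose qualification follows from solvability of the state equation for every control, and $N_K=(\ker\calC)^\perp=\operatorname{ran}\calC^*$.

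There is one sign slip to fix. From $0=\nabla J(z^\star)+(0,\xi)+n$ you must choose $p^\star$ with $\calC^*p^\star=n$, not $-\calC^*p^\star=n$; otherwise you get $-\calC^*p^\star$ in the first row of \eqref{eq:os}. Correspondingly, the adjoint equation in your alternative should read $\dot\lambda+A^\top\lambda=x^\star-x_\mathrm{ref}$ with $\lambda(t_f)=0$. This is the convention that matches $u^\star=P_F(\tfrac{1}{\alpha}B^*\lambda^\star+u_\mathrm{ref})$ in Lemma~\ref{lem:optimal_control}.
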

  
In the following, we will reformulate this optimality system. First, we will give an alternative representation of the adjoint as a backwards-in-time differential equation. 
     
     As proven in \cite[Lem. 3.2]{gernandt2025port}, the Hilbert space adjoint  $\calC^*:L^2(0,t_f; \R^n)  \times \R^n \supset \dom{\calC^*} \to  L^2(0,t_f; \R^n) \times L^2(0,t_f; \R^m)$ is given by
        \begin{align*}
        \calC^* \begin{bmatrix}
            \lambda \\ \lambda_0
        \end{bmatrix} = \begin{bmatrix}
        -\frac{\mathrm{d}}{\mathrm{d}\tau} \lambda -A^* \lambda \\
        -B^* \lambda
    \end{bmatrix}
    \end{align*}
    with domain
    \begin{align*}
       \dom{\calC^*} = \dom{\calA^*} = \{(\lambda,\lambda_0)\in H^1(0,t_f; \R^n)  \times \R^n \,|\, \lambda(0)=\lambda_0 \wedge \lambda(t_f)=0\}.
    \end{align*}
The following lemma shows that the control $u$ can be eliminated from the optimality system \eqref{eq:os}, yielding a \textit{control-reduced optimality system}. To this end we use the \emph{projection} $P_C$ onto a closed convex subset $C$ of a Hilbert space $X$ and we recall that 
projection onto a closed convex set $P_C$ is \emph{firmly non-expansive}, that is
\begin{equation}\label{eq:P_F_firmly_nonexp}
         \langle P_C(x)-P_C(z),x-z\rangle_{X} \geq \| P_C(x)-P_C(z)\|_{X}^2
     \end{equation}
     for all $x,z \in X$, see e.g. \cite{BausComb2011}. %\cite[Lem. 3.1]{preuster2026stabilization}.
\begin{lemma}\label{lem:optimal_control}
    The tuple $((x^\star,u^\star), (\lambda^\star, \lambda_0^\star)) \in \dom{\calC} \times \dom{\calC^*}$ solves \eqref{eq:os} if and only if $(x^\star, (\lambda^\star, \lambda_0^\star)) \in H^1(0,t_f; \R^n) \times \dom{\calC^*}$ solve the \emph{reduced optimality system} 
    \begin{equation}\label{eq:red_OS}
   \begin{bmatrix}
        x^\star- x_\mathrm{ref} +( -\frac{\mathrm{d}}{\mathrm{d}\tau} \lambda^\star -A^* \lambda^\star)  \\
        \stack{- (\tfrac{\mathrm{d}}{\mathrm{d}\tau}x^\star -A x^\star) + B P_F(1/\alpha B^* \lambda^\star+u_\mathrm{ref})}{-x^\star(0)} +\stack{0}{x_0}
    \end{bmatrix}=0, \qquad \lambda^\star(0)= \lambda_0^\star,
\end{equation}
    and 
    \begin{equation}\label{eq:opt_control}
     u^\star = P_F(\tfrac{1}{\alpha}B^* \lambda^\star+u_\mathrm{ref}).
\end{equation}
\end{lemma}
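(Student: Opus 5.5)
The plan is to unpack the abstract optimality system \eqref{eq:os} componentwise, using the explicit form of $\calC$ from \eqref{eq:defC} and of the adjoint $\calC^*$ from \cite[Lem. 3.2]{gernandt2025port}. Only the control component then needs real work: it is resolved via the characterization of the projection $P_F$ as the resolvent of the normal cone $\partial i_F$. Since every step will be an equivalence, both directions of the lemma follow at once.

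\textbf{Step 1 (componentwise form).} For $(\lambda^\star,\lambda_0^\star)\in\dom{\calC^*}$ we have $\calC^*(\lambda^\star,\lambda_0^\star) = (-\tfrac{\mathrm{d}}{\mathrm{d}\tau}\lambda^\star - A^*\lambda^\star,\, -B^*\lambda^\star)$, and the domain condition already encodes $\lambda^\star(0)=\lambda_0^\star$ and $\lambda^\star(t_f)=0$. Using this, the inclusion \eqref{eq:os} splits into four relations.
\begin{enumerate}[label=(\alph*)]
\item The state-adjoint equation $x^\star - x_\mathrm{ref} - \tfrac{\mathrm{d}}{\mathrm{d}\tau}\lambda^\star - A^*\lambda^\star = 0$.
\item The control inclusion $0\in \alpha(u^\star-u_\mathrm{ref}) - B^*\lambda^\star + \partial i_F(u^\star)$ in $L^2(0,t_f;\R^m)$.
\item The state equation $-(\tfrac{\mathrm{d}}{\mathrm{d}\tau}x^\star - Ax^\star - Bu^\star)=0$.
\item The initial condition $-x^\star(0)+x_0=0$.
\end{enumerate}
Relations (a), (c), (d) and $\lambda^\star(0)=\lambda_0^\star$ are already the rows of \eqref{eq:red_OS}, once $u^\star$ is substituted in (c).

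\textbf{Step 2 (eliminating the control).} Dividing (b) by $\alpha>0$ and using that $\partial i_F = N_F$ is a cone, so that $\tfrac1\alpha\partial i_F = \partial i_F$, I rewrite (b) as
\[
\tfrac{1}{\alpha}B^*\lambda^\star + u_\mathrm{ref} \in u^\star + \partial i_F(u^\star) = (I+\partial i_F)(u^\star).
\]
Since $F\subset L^2(0,t_f;\R^m)$ is nonempty (it contains $u_\mathrm{ref}$), closed and convex, the resolvent $(I+\partial i_F)^{-1}$ is single-valued and everywhere defined, and it coincides with $P_F$. This is the variational-inequality characterization of the projection, $\langle v - P_F v, w - P_F v\rangle\le 0$ for all $w\in F$, which is exactly $v-P_Fv\in N_F(P_Fv)$; see \cite{BausComb2011}. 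Hence (b) holds if and only if $u^\star = P_F(\tfrac1\alpha B^*\lambda^\star+u_\mathrm{ref})$, which is \eqref{eq:opt_control}. Inserting this into (c) gives the second row of \eqref{eq:red_OS}.

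\textbf{Step 3 (converse and regularity).} For the converse, given a solution $(x^\star,(\lambda^\star,\lambda_0^\star))$ of \eqref{eq:red_OS}, I define $u^\star$ by \eqref{eq:opt_control} and run Steps 1--2 backwards. The only additional point is that $u^\star\in L^2(0,t_f;\R^m)$, so that $(x^\star,u^\star)\in\dom{\calC}$. This holds because $B^*\lambda^\star\in H^1\subset L^2$ and $P_F$ maps $L^2$ into $F\subset L^2$.

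I expect the main obstacle to be Step 2: getting the sign conventions right, since $\calC^*$ contributes $-B^*\lambda$, and justifying that the resolvent identity is taken in $L^2$ with $F$ a general closed convex subset of $L^2$ rather than a pointwise constraint set. Both points are handled by working directly with the normal cone of $F$ in the Hilbert space $L^2(0,t_f;\R^m)$, with no pointwise argument needed.
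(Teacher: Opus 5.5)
Your proposal is correct and follows essentially the same route as the paper: unpack \eqref{eq:os} componentwise and resolve the control inclusion via the identity $(I+\partial i_F)^{-1}=P_F$ in $L^2(0,t_f;\R^m)$. You also make explicit two points the paper leaves implicit, namely that $\partial i_F$ is a cone (which justifies dividing by $\alpha$) and that $u^\star\in L^2$ in the converse direction.
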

\begin{proof}
   The proof is direct consequence of the fact that the resolvent of the subgradient of $i_F$ coincides with the projection $P_F$ of $u \in L^2(0,t_f; \R^m)$ onto the closed and convex set $F$ (see, e.g.\ \cite[Lem. 3.1]{preuster2026stabilization}): 
    \begin{align*}
        & 0 \in \alpha u^\star - \alpha u_\mathrm{ref} - B^* \lambda^\star + \partial i_F(u^\star) \\
        \Longleftrightarrow \hspace{1cm} & 0 \in  u^\star - u_\mathrm{ref} - \tfrac{1}{ \alpha}B^* \lambda^\star + \partial i_F(u^\star) \\
        \Longleftrightarrow \hspace{1cm} &  \tfrac{1}{ \alpha}B^* \lambda^\star + u_\mathrm{ref} \in  u^\star  + \partial i_F(u^\star) \\
        \Longleftrightarrow \hspace{1cm} & u^\star = (I+\partial i_F)^{-1}( \tfrac{1}{ \alpha}B^* \lambda^\star + u_\mathrm{ref}) = P_F( \tfrac{1}{ \alpha}B^* \lambda^\star + u_\mathrm{ref} ) .
    \end{align*}
\end{proof}
The following results reveals a monotonicity property of the operator governing the reduced optimality system \eqref{eq:red_OS}.
\begin{proposition}\label{prop:OS_operator_inj}
    For all $x_0 \in X$, the operator $M_{\mathrm{opt},x_0} : W \supset \dom{M_{\mathrm{opt},x_0}} \eqqcolon \dom{\calA} \times \dom{\calA^*} \to W$ with $W \coloneqq  L^2(0,t_f;\R^n) \times L^2(0,t_f; \R^n)  \times \R^n$ and 
    $$
    M_{\mathrm{opt},x_0}\begin{bmatrix}
       x \\ \stack{\lambda}{\lambda_0}
   \end{bmatrix} \coloneqq \begin{bmatrix}
        x^\star- x_\mathrm{ref} +\calA^* \lambda^\star  \\
        -\calA x^\star + \stack{B P_F(1/\alpha B^* \lambda^\star+u_\mathrm{ref})}{0} +\stack{0}{x_0}
    \end{bmatrix}
    $$ 
    is maximally monotone and injective. 
\end{proposition}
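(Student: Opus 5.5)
The plan is to split $M_{\mathrm{opt},x_0}$ into a skew-adjoint part plus a monotone, Lipschitz, everywhere-defined perturbation. Maximal monotonicity then follows from \cite[Cor.~25.5]{BausComb2011}, the same result used in the proof of Theorem~\ref{thm:homogeneous_Cauchy}. Writing $w=(x,p)$ with $p=(\lambda,\lambda_0)$, I would decompose
\[
M_{\mathrm{opt},x_0}(w) = \underbrace{\begin{bmatrix} \calA^* p \\ -\calA x\end{bmatrix}}_{=: \mathcal{J} w} + \underbrace{\begin{bmatrix} x \\ \stack{B P_F(\tfrac1\alpha B^*\lambda+u_\mathrm{ref})}{0}\end{bmatrix}}_{=: G(w)} + \underbrace{\begin{bmatrix} -x_\mathrm{ref} \\ \stack{0}{x_0}\end{bmatrix}}_{=:c}.
\]
The constant $c$ does not affect monotonicity or maximality, and $\dom{\mathcal{J}}=\dom{\calA}\times\dom{\calA^*}=\dom{M_{\mathrm{opt},x_0}}$.

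\textbf{Step 1: $\mathcal J$ is maximally monotone.} The operator $\calA$ is densely defined, since $H^1$ is dense in $L^2$. It is also closed: its graph norm is equivalent to the $H^1$-norm via the trace estimate $|x(0)|\lesssim\|x\|_{H^1}$. Hence $\calA^{**}=\calA$. The block operator $\mathcal J=\left[\begin{smallmatrix}0&\calA^*\\-\calA&0\end{smallmatrix}\right]$ is then skew-adjoint on $W$, with $\mathcal J^*=-\mathcal J$ computed componentwise using $\calA^{**}=\calA$. In particular $\langle \mathcal J w,w\rangle_W=\langle \calA^*p,x\rangle-\langle \calA x,p\rangle=0$. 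A skew-adjoint operator is maximally monotone: for $\mathcal{J}^*=-\mathcal J$ one has $\ker(I+\mathcal J^*)=\ker(I-\mathcal J)=\{0\}$ by skewness, so $I+\mathcal J$ has dense closed range, which is all of $W$. I expect this step to be the main technical point. The care lies in checking closedness of $\calA$, the exact form of the adjoint domain (the boundary conditions $\lambda(0)=\lambda_0$ and $\lambda(t_f)=0$ from \cite[Lem.~3.2]{gernandt2025port}), and the block adjoint computation. Everything else is routine.

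\textbf{Step 2: $G$ is monotone, Lipschitz and everywhere defined.} Set $a_i=\tfrac1\alpha B^*\lambda_i+u_\mathrm{ref}$, so that $\lambda_1-\lambda_2=\alpha\,(B^*)^{-}$-type relation holds only through $B^*(\lambda_1-\lambda_2)=\alpha(a_1-a_2)$. Using \eqref{eq:P_F_firmly_nonexp},
\[
\langle G(w_1)-G(w_2),w_1-w_2\rangle_W=\|x_1-x_2\|^2+\alpha\langle P_F(a_1)-P_F(a_2),a_1-a_2\rangle\ \ge\ \|x_1-x_2\|^2+\alpha\|P_F(a_1)-P_F(a_2)\|^2 .
\]
Lipschitz continuity follows because $P_F$ is nonexpansive and $B$, $B^*$ are bounded multiplication operators. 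By \cite[Cor.~25.5]{BausComb2011}, $\mathcal J+G$ is maximally monotone, and adding $c$ preserves this.

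\textbf{Step 3: injectivity.} Suppose $M_{\mathrm{opt},x_0}(w_1)=M_{\mathrm{opt},x_0}(w_2)$. By skewness of $\mathcal J$ and the estimate of Step 2,
\[
0=\langle M_{\mathrm{opt},x_0}(w_1)-M_{\mathrm{opt},x_0}(w_2),w_1-w_2\rangle\ge\|x_1-x_2\|^2,
\]
so $x_1=x_2$. The first component then gives $\calA^*(p_1-p_2)=0$. Since $\calA$ is surjective, as shown above, we have $\ker\calA^*=\ran{\calA}^\perp=\{0\}$. Hence $p_1=p_2$, and $M_{\mathrm{opt},x_0}$ is injective.
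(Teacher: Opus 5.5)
Your proposal is correct and follows the paper's proof. You use the same decomposition of $M_{\mathrm{opt},x_0}$ into the skew-adjoint block $\left[\begin{smallmatrix}0&\calA^*\\-\calA&0\end{smallmatrix}\right]$ plus a monotone, continuous, everywhere-defined part, with firm nonexpansiveness of $P_F$ giving monotonicity and \cite[Cor.~25.5]{BausComb2011} giving maximality, and you get injectivity from the $\|x_1-x_2\|^2$ lower bound followed by injectivity of the adjoint. Two small differences work in your favour: you justify the skew-adjointness that the paper only asserts, and your step $\ker\calA^*=\ran{\calA}^\perp=\{0\}$ is slightly more precise than the paper's appeal to injectivity of $\calC^*$, since the first component of the equation only yields $\calA^*(p_1-p_2)=0$.
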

\begin{proof}
    The operator $M_{\mathrm{opt},x_0}$ is decomposed as
    \begin{equation}\label{eq:os_decomp}
         M_{\mathrm{opt},x_0}\begin{bmatrix}
       x \\ \stack{\lambda}{\lambda_0}
   \end{bmatrix} = \begin{bmatrix}
        x- x_\mathrm{ref}  \\
        \stack{B P_F(1/\alpha B^* \lambda+u_\mathrm{ref})}{0}
    \end{bmatrix} + \begin{bmatrix}
       0 &  \calA^*  \\
        -\calA  & 0\vphantom{ \stack{\lambda}{\lambda_0}}
    \end{bmatrix}\begin{bmatrix}
       x \\ \stack{\lambda}{\lambda_0}
   \end{bmatrix}  + \begin{bmatrix}
        0  \\
       \stack{0}{x_0} \vphantom{ \stack{\lambda}{\lambda_0}}
    \end{bmatrix}.
    \end{equation}
    Due to linearity and skew-adjointness of the second block operator of the above equation, we have  for all $w_i = (x_i,(\lambda_i,\lambda_{0i})) \in \dom{M_{\mathrm{opt},x_0}}$ that
    \begin{align}
    \nonumber
        & \langle M_{\mathrm{opt},x_0} (w_1) -M_{\mathrm{opt},x_0} (w_2),w_1-w_2 \rangle_W \\
        &= \langle x_1 - x_\mathrm{ref}- (x_2 - x_\mathrm{ref}),x_1-x_2 \rangle_{L^2(0,t_f;\R^n)} \nonumber\\
        & \hphantom{=} + \langle B P_F(\tfrac{1}{\alpha} B^* \lambda_1+u_\mathrm{ref})-B P_F(\tfrac{1}{\alpha} B^* \lambda_2+u_\mathrm{ref}) ,\lambda_1-\lambda_2 \rangle_{L^2(0,t_f;\R^n)} \nonumber\\
        & = \| x_1-x_2 \|_{L^2(0,t_f;\R^n)}^2 \label{eq:norm_estimate}\\
        &\hphantom{=}+ \alpha  \langle  P_F(\tfrac{1}{\alpha} B^* \lambda_1+u_\mathrm{ref})- P_F(\tfrac{1}{\alpha} B^* \lambda_2+u_\mathrm{ref}) , \tfrac{1}{\alpha}B^* \lambda_1-\tfrac{1}{\alpha}B^*\lambda_2 \rangle_{L^2(0,t_f;\R^m)} \nonumber\\
        &\geq \alpha  \langle  P_F(\tfrac{1}{\alpha} B^* \lambda_1+u_\mathrm{ref})- P_F(\tfrac{1}{\alpha} B^* \lambda_2+u_\mathrm{ref}) , \tfrac{1}{\alpha}B^* \lambda_1+u_\mathrm{ref}-(\tfrac{1}{\alpha}B^*\lambda_2 +u_\mathrm{ref})\rangle_{L^2(0,t_f;\R^m)} \nonumber \\
        & \stackrel
        {\eqref{eq:P_F_firmly_nonexp}}{\geq~~~~} \alpha \|  P_F(\tfrac{1}{\alpha} B^* \lambda_1+u_\mathrm{ref})- P_F(\tfrac{1}{\alpha} B^* \lambda_2+u_\mathrm{ref}) \|^2_{L^2(0,t_f;\R^m)} \nonumber\\
        & \geq 0\nonumber.
    \end{align}
The maximality is a direct consequence of the decomposition \eqref{eq:os_decomp} of $M_{\mathrm{opt},x_0}$ and into a skew-adjoint operator and a continuous everywhere-defined operator, \cite[Cor. 25.5]{BausComb2011}. To prove injectivity, let $M_{\mathrm{opt},x_0}(w_1)=M_{\mathrm{opt},x_0}(w_2)$ for some $w_1,w_2 \in \dom{M_{\mathrm{opt},x_0}}$. From the estimate \eqref{eq:norm_estimate}, we conclude that $ x_1 = x_2$. Hence, 
    \begin{equation*}
        \calC^* \left( \roundstack{\lambda_1}{\lambda_{01}} -\roundstack{\lambda_2}{\lambda_{02}} \right) =0
    \end{equation*}
    and by injectivity of $\calC^*$, \cite[Lem. 3.2]{gernandt2025port}, we obtain $w_1=w_2$.
\end{proof}
We now introduce the flow associated with the operator $M_{\mathrm{opt},x_0}$ that will model the associated optimization algorithm in the sense of a gradient flow. One may then consider the nonlinear evolution equation
\begin{equation*}
    \frac{\mathrm{d}}{\mathrm{d}t} w(t) = - M_{\mathrm{opt},x_0}(w(t)). 
\end{equation*}
Note that the preceding evolution equation can be also understood as a primal-dual gradient dynamics where the control is reduced from the Lagrangian, \cite{cherukuri2016asymptotic,cherukuri2017saddle}. 
Observe that 
\begin{equation*}
    M_{\mathrm{opt},x_0} = M_{\mathrm{opt},0} + \begin{bmatrix}
        0 \\ \stack{0}{x_0}
    \end{bmatrix}.
\end{equation*}

In view of our application in MPC (see Algorithms~\ref{alg:MPC} and~\ref{alg:subopt_MPC}), the initial value $x_0 \in X$ will play the role of an input to the optimizer dynamics from the plant system (see Figure~\ref{fig:subopt}). Consequently, we define 
the \textit{optimizer dynamics}
\begin{subequations}\label{eq:optimizer}
    \begin{align}
    \varepsilon \frac{\mathrm{d}}{\mathrm{d}t} \begin{pmatrix}
        x(t) \\ p(t)
    \end{pmatrix}  &= - M_{\mathrm{opt},0} \begin{pmatrix}
        x(t) \\ p(t)
    \end{pmatrix} - \begin{bmatrix}
        0 \\ \stack{0}{I}
    \end{bmatrix} u_\mathrm{opt}(t), \label{eq:optimizer:state}\\
    y_\mathrm{opt}(t) &= - \lambda_0(t),\label{eq:optimizer:output}
\end{align}
\end{subequations}
associated with the optimality system~\eqref{eq:red_OS}, where $\varepsilon > 0$ acts as a time-scaling parameter. Introducing $\varepsilon$ allows the optimizer and plant dynamics to evolve on different time scales in the continuous-time formulation. In particular, small values of $\varepsilon$ correspond to fast optimizer dynamics relative to the plant, whereas large values correspond to slower optimizer dynamics. Equivalently, by the chain rule,
\begin{equation*}
   \tfrac{\mathrm{d}}{\mathrm{d}t} w(\varepsilon t)
    = \varepsilon \tfrac{\mathrm{d}}{\mathrm{d}t} w(\varepsilon t),
\end{equation*}
so that $\varepsilon$ induces a rescaling (stretching or compression) of time.
The system~\eqref{eq:optimizer} indeed defines a maximally monotone control system as in Definition~\ref{def:monosys}. We define $\mathcal{B} \coloneqq [0,[0,I]]^\top : \R^n \to W$ to represent the input map. Note that~\eqref{eq:optimizer} defines a control system on the state space $W$ and the input/output space $\R^n$.

\subsection{Coupled optimizer-plant dynamics}\label{subsec:coupling}
As discussed in the beginning of this section, our aim is to define a stabilizing control (that respects some input constraints) for a continuous-time finite-dimensional control system governed by a monotone operator $M_p: \R^n \to \R^n$ and an input mapping $B_p \in \R^{m \times n}$. Precisely, consider the control system
\begin{subequations}\label{eq:MPC_plant}
    \begin{align}
    \tfrac{\mathrm{d}}{\mathrm{d}t} x_p(t) &= -M_p(x_p(t)) + B_pu_p(t), \qquad x_p(0)=x_{p0},\label{eq:MPC_plant:state}\\
    y_p(t) &= x_p(t) .\label{eq:MPC_plant:output}
\end{align}
\end{subequations}
Here we note that the output map is chosen as the identity, as the information flow from the plant to the optimal control problem in the standard MPC algorithm is done via the initial value, see Algorithm~\ref{alg:MPC}. We refer to \eqref{eq:MPC_plant}, abbreviated by $(M_p,B_p,I)$, as the \textit{plant}, as it represents the system being controlled. Note that this system does not belong to the class of maximally monotone control systems defined in Definition~\ref{def:monosys}, as the output operator is not the adjoint of the input operator.

We now interconnect the plant with the optimizer dynamics. In an MPC-fashion (see Sections~\ref{subsec:mpc_algo} and \ref{subsec:suboptmpc_intro} and Figure~\ref{fig:subopt}), we define the coupling condition 
\begin{equation}\label{eq:coupling}
    \begin{bmatrix}
        u_p(t) \\ u_\mathrm{opt}(t)
    \end{bmatrix} = \begin{bmatrix}
        P_F(\tfrac{1}{\alpha}B^* \lambda_0(t)+u_\mathrm{ref}) \\ x_p(t)
    \end{bmatrix} = \begin{bmatrix}
        P_F(-\tfrac{1}{\alpha}B^*  y_\mathrm{opt}(t)+u_\mathrm{ref}) \\ y_p(t)
    \end{bmatrix} .
\end{equation}
Moreover, the state space of the interconnected system is simply the Cartesian product of the individual state spaces of \eqref{eq:optimizer} and \eqref{eq:MPC_plant}, that is, $V \coloneqq \R^n \times W$. 
Depending on the level of abstraction, we will abbreviate the state of the coupled system via
\begin{equation*}
    v(t) \coloneqq (x_p(t),w(t)) \coloneqq (x_p(t), x(t) , (\lambda(t), \lambda_0(t))) .
\end{equation*}
The coupling condition \eqref{eq:coupling} leads to the autonomous continuous-time \textit{coupled optimizer-plant dynamics}
\begin{align}\label{eq:optimizer_plant_epsi_ge0}
\begin{bmatrix}
    I &0 \\0 & \varepsilon
\end{bmatrix}\frac{\mathrm{d}}{\mathrm{d}t} \begin{pmatrix}
    x_p(t) \\ w(t)
\end{pmatrix}  &=-\begin{bmatrix}
    M_p(x_p(t)) \vphantom{ BP_F(\tfrac{1}{\alpha}B^*  \lambda_0(t))} \\ M_{\mathrm{opt},0}(w(t))  \vphantom{ \stack{0}{\stack{0}{x_p(t)}}}
\end{bmatrix} + \begin{bmatrix}
      BP_F(\tfrac{1}{\alpha}B^*  \lambda_0(t)+u_\mathrm{ref}) \\ \stack{0}{\stack{0}{-x_p(t)}}
\end{bmatrix}
\end{align}
which is written compactly as
\begin{align}\label{eq:optimizer_plant}
\frac{\mathrm{d}}{\mathrm{d}t} v(t)  &= - M(v(t)) -F(v(t)), \qquad v(0)=v_0, \qquad t\geq 0. 
\end{align}
with the diagonal $M: V \supset \dom{M} \to V$ defined by 
\begin{equation}\label{eq:def_M}
    M \begin{bmatrix}
       x_p \\ \begin{bmatrix}
           x \\ \stack{\lambda}{\lambda_0}
       \end{bmatrix}
    \end{bmatrix}  \coloneqq \begin{bmatrix}
        M_p(x_p) \\ \tfrac{1}{\varepsilon} M_{\mathrm{opt},0} \begin{bmatrix}
           x \\ \stack{\lambda}{\lambda_0}
       \end{bmatrix} 
    \end{bmatrix}, \qquad \dom{M}= \R^n \times \dom{M_{\mathrm{opt},0}}
\end{equation}
and the interconnection term $F: V \to V$ given by
\begin{equation}\label{eq:def_F}
    F \begin{bmatrix}
       x_p \\ \begin{bmatrix}
           x \\ \stack{\lambda}{\lambda_0}
       \end{bmatrix}
    \end{bmatrix}  \coloneqq - \begin{bmatrix}
        BP_F(\tfrac{1}{\alpha}B^*  \lambda_0+u_\mathrm{ref}) \\ - \tfrac{1}{\varepsilon} \begin{bmatrix}
           0 \vphantom{x} \\ \stack{0\vphantom{\lambda_0}}{x_p}
       \end{bmatrix} 
    \end{bmatrix},
\end{equation}

The following theorem establishes well-posedness of the coupled optimizer-plant system \eqref{eq:optimizer_plant}.
\begin{theorem}\label{thm:optimizer_plant}
The operator $M$ given by \eqref{eq:def_M} is m-monotone and $F: V \to V$ as in \eqref{eq:def_F} is globally Lipschitz continuous with Lipschitz bound 
\begin{equation*}
    L \coloneqq \sqrt{2}\max(\tfrac{2}{\alpha}\|B\|_{L(U,X)}, \tfrac{1}{\varepsilon}).
\end{equation*}
Moreover, for every $v_0\in \dom{M}$, there exists a unique strong solution of~\eqref{eq:optimizer_plant}. In particular, the operator $M+F$ is the \textit{infinitesimal generator} of the semigroup given by the exponential formula
\begin{equation}\label{eq:optimizer_plant_semigroup}
    S_{M+F}(t)v_0 = \lim_{n \to \infty} (I+\tfrac{t}{n}(M+F))^{-n}v_0.
\end{equation}
\end{theorem}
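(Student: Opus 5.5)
The plan is to split the statement into three parts: maximal monotonicity of $M$, the Lipschitz estimate for $F$, and well-posedness. The last part then follows at once from Theorem~\ref{thm:homogeneous_Cauchy} applied on the Hilbert space $V=\R^n\times W$, with $M$ from \eqref{eq:def_M} and $F$ from \eqref{eq:def_F}. That theorem gives the unique strong solution of \eqref{eq:optimizer_plant} for every $v_0\in\dom M$, and the exponential formula \eqref{eq:optimizer_plant_semigroup}.

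\emph{Maximal monotonicity of $M$.} I will use the range characterisation: $\ran(I+M)=V$ together with monotonicity. I assume, as is implicit in the plant description, that $M_p:\R^n\to\R^n$ is monotone and continuous. Under that assumption it is maximally monotone, as a continuous, monotone, everywhere defined operator (\cite[Cor. 25.5]{BausComb2011} applied with the zero operator).

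By Proposition~\ref{prop:OS_operator_inj} with $x_0=0$, the operator $M_{\mathrm{opt},0}$ is maximally monotone on $W$. Multiplying by $\tfrac1\varepsilon>0$ preserves both monotonicity and surjectivity of $I+\cdot$, since $\ran(\lambda I+M_{\mathrm{opt},0})=W$ for every $\lambda>0$.

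For the diagonal operator $M$, monotonicity is just the sum of the two componentwise inequalities. Moreover, $I+M$ acts componentwise, so $\ran(I+M)=\ran(I+M_p)\times\ran(I+\tfrac1\varepsilon M_{\mathrm{opt},0})=\R^n\times W=V$.

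\emph{Lipschitz bound for $F$.} Take $v=(x_p,x,\lambda,\lambda_0)$ and $v'=(x_p',x',\lambda',\lambda_0')$. The second block of $F(v)-F(v')$ is $\tfrac1\varepsilon(0,0,x_p-x_p')$, whose norm is at most $\tfrac1\varepsilon\|x_p-x_p'\|$. For the first block I use that $P_F$ is nonexpansive, which follows from \eqref{eq:P_F_firmly_nonexp} and Cauchy--Schwarz. The constant shift $u_\mathrm{ref}$ cancels, so
\[
\|BP_F(\tfrac1\alpha B^*\lambda_0+u_\mathrm{ref})-BP_F(\tfrac1\alpha B^*\lambda_0'+u_\mathrm{ref})\|\le \tfrac1\alpha\|B\|\,\|B^*\|\,\|\lambda_0-\lambda_0'\|\le c_1\|v-v'\|.
\]
Writing $c_2=\tfrac1\varepsilon$ and using $(a+b)^2\le 2(a^2+b^2)$, together with the fact that $\lambda_0$ and $x_p$ are distinct components of $v$, I obtain
\[
\|F(v)-F(v')\|^2\le c_1^2\|\lambda_0-\lambda_0'\|^2+c_2^2\|x_p-x_p'\|^2\le 2\max(c_1,c_2)^2\|v-v'\|^2 .
\]
The factor $\sqrt2$ is in fact not needed at this point, but it is harmless. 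Hence $L=\sqrt2\max(c_1,c_2)$ is a Lipschitz constant.

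\emph{Main obstacle.} The delicate step is identifying $c_1$ with the stated constant $\tfrac2\alpha\|B\|$. The direct estimate above gives $c_1=\tfrac1\alpha\|B\|\|B^*\|=\tfrac1\alpha\|B\|^2$. This is dominated by $\tfrac2\alpha\|B\|$ only when $\|B\|\le2$, for instance under the paper's normalisation of the input map in $L(U,X)$.

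I would first check whether the paper's convention for $B$ (an $m\times n$ versus $n\times m$ matrix, and the norm $\|B\|_{L(U,X)}$) yields this normalisation. If it does not, I would carry $c_1=\tfrac1\alpha\|B\|^2$ through the argument. This is legitimate because Theorem~\ref{thm:homogeneous_Cauchy} only requires some finite global Lipschitz constant, so the existence, uniqueness and semigroup statements are unaffected by the precise value of $L$.
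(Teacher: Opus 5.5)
Your proposal is correct and follows essentially the paper's proof. Both argue m-monotonicity of the block-diagonal $M$ from that of $M_p$ and $\tfrac{1}{\varepsilon}M_{\mathrm{opt},0}$, then nonexpansiveness of $P_F$ from \eqref{eq:P_F_firmly_nonexp}, then Theorem~\ref{thm:homogeneous_Cauchy}; your direct use of the product Hilbert norm, instead of the paper's detour through $\|\cdot\|_1$, also shows that the $\sqrt{2}$ is superfluous.

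The obstacle you flag is real, and no normalisation of $B$ in the paper removes it. The paper's chain passes from $\tfrac{1}{\alpha}\|B\|\,\|B^*\|$ to $\tfrac{L}{\sqrt{2}}$ without justification, and the stated $L$ is actually violated for an unconstrained control set $F$ (so $P_F=I$) with $\|B\|>2\sqrt{2}$ and $\varepsilon\geq \tfrac{\alpha}{2\|B\|}$. The constant should read $\max(\tfrac{1}{\alpha}\|B\|^2,\tfrac{1}{\varepsilon})$, as you derived, and this leaves the existence, uniqueness and exponential-formula conclusions untouched.
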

\begin{proof}
    The operator $M$ is m-monotone because it is a block-diagonal operator composed of m-monotone operators: The upper block $M_p$ is m-monotone by assumption and the bottom block given by $\tfrac1\varepsilon M_{\mathrm{opt},0}$ is m-monotone since it is a continuous, monotone and everywhere defined perturbation of a linear skew-adjoint operator. We proceed with the global Lipschitz bound on the perturbation $F$. To this end, observe that firm non-expansivity of the projection (see \eqref{eq:P_F_firmly_nonexp}) yields 
    \begin{align*}
        \| P_F(x)-P_F(z) \|^2& \leq \langle  P_F(x)-P_F(z), x-z \rangle  \leq  \| P_F(x)-P_F(z) \| \|x-z\|
    \end{align*}
    which implies $$\| P_F(x)-P_F(z) \| \leq \|x-z\|$$ for all $x,z \in L^2(0,t_f; \R^m)$, where the norm subscript $L^2(0,t_f; \R^m)$ is omitted for readability. The product space $V=\R^n \times W$ of two Hilbert spaces may be endowed with either 
    \begin{equation*}
        \| (x,y)\|_1 \coloneqq \| x\|_{\R^n} + \|y\|_W \qquad \text{or} \qquad  \| (x,y)\|_2 \coloneqq (\| x\|_{\R^n}^2 + \|y\|_W^2)^{1/2},
    \end{equation*}
    Clearly, both norms are equivalent and are related via the estimate
    \begin{equation*}
        \| (x,y)\|_2  \leq \| (x,y)\|_1 \leq  \sqrt{2} \| (x,y)\|_2 
    \end{equation*}
    for all $(x,y) \in V$. Consequently, let $v_i = (x_{p}^i, x^i, (\lambda^i, \lambda_0^i)) \in V, i=1,2$, be arbitrary and consider
    \begin{align*}
         \|F(v_1) - F(v_2)\|_2 & \leq  \|F(v_1) - F(v_2)\|_1 \\
         & = \|   BP_F(\tfrac{1}{\alpha}B^*  \lambda_0^1+u_\mathrm{ref}) -   BP_F(\tfrac{1}{\alpha}B^*  \lambda_0^2+u_\mathrm{ref}) \|_{\R^n} + \tfrac{1}{\varepsilon} \| x_p^1 -x_p^2 \|_{\R^n} \\ 
         & \leq \|B\|_{L(U,\R^n)}  \| P_F(\tfrac{1}{\alpha}B^*      \lambda_0^1+u_\mathrm{ref}) \hspace{-0.5mm} -\hspace{-0.5mm}  P_F(\tfrac{1}{\alpha}B^*  \lambda_0^2+u_\mathrm{ref}) \|_U \!+ \!\tfrac{1}{\varepsilon} \| x_p^1 -x_p^2 \|_{\R^n} \\
         & \leq \|B\|_{L(U,\R^n)}  \| \tfrac{1}{\alpha}B^*  \lambda_0^1 - \tfrac{1}{\alpha}B^*  \lambda_0^2 \|_U + \tfrac{1}{\varepsilon} \| x_p^1 -x_p^2 \|_{\R^n} \\
         & \leq \tfrac{1}{\alpha} \|B\|_{L(U,\R^n)}\|B^*\|_{L(\R^n,U)}   \|  \lambda_0^1 - \lambda_0^2 \|_{\R^n} + \tfrac{1}{\varepsilon} \| x_p^1 -x_p^2 \|_{\R^n} \\
         &\leq \tfrac{L}{\sqrt{2}} \|v_1 - v_2\|_1 \\
         & \leq L\|v_1 - v_2\|_2.
    \end{align*}
This proves that $F$ is globally Lipschitz and hence the assertion is an immediate consequence of Theorem~\ref{thm:homogeneous_Cauchy}.
\end{proof}
\subsection{Instantaneous optimization via infinitely fast optimizer dynamics}\label{subsec:instant}
The interconnection depicted in Figure~\ref{fig:subopt} and given by \eqref{eq:optimizer_plant_epsi_ge0} corresponds to a suboptimal MPC scheme (Algorithm~\ref{alg:subopt_MPC}), as the optimizer dynamics are not necessarily at steady state such that the optimality condition is not satisfied.  This is in contrast to the classical MPC approach described in Algorithm~\ref{alg:MPC}. The MPC-feedback law assigns to each current state $x_p(t)$ the optimal control $u^\star(t)=\mu(x_p(t))$ at time instance $t$; that is, it is assumed that the optimal solution is immediately available and characterized by an algebraic relation encoded by the optimality condition. In the spirit of the coupled optimizer-plant dynamics \eqref{eq:optimizer_plant}, see also Figure \ref{fig:subopt}, we may understand this as the limit case $\varepsilon=0$. 

More precisely, the coupled system is subject to the differential-algebraic equation (DAE)
\begin{align}\label{eq:optimizer_plant_DAE}
\begin{bmatrix}
    I & 0\\ 0& 0
\end{bmatrix}\frac{\mathrm{d}}{\mathrm{d}t} \begin{pmatrix}
    x_p(t) \\ w(t)
\end{pmatrix}  &=-\begin{bmatrix}
    M_p(x_p(t)) \vphantom{ BP_F(\tfrac{1}{\alpha}B^*  \lambda_0(t))} \\ M_{\mathrm{opt},0}(w(t))  \vphantom{ \stack{0}{\stack{0}{x_p(t)}}}
\end{bmatrix} + \begin{bmatrix}
      BP_F(\tfrac{1}{\alpha}B^*  \lambda_0(t)+u_\mathrm{ref}) \\ \stack{0}{\stack{0}{-x_p(t)}}
\end{bmatrix}.
\end{align}
This system is obtained from \eqref{eq:optimizer_plant} in the singular limit $\varepsilon \to 0$, in which the optimizer dynamics evolves on an infinitely fast time scale. A schematic illustration of this continuous-time coupling is given in Figure~\ref{fig:subopt_DAE}.
\begin{figure}[htb]
  \begin{center}  \scalebox{0.9}{\begin{tikzpicture}[
  >=Latex,
  font=\small,
  block/.style={
    draw,
    thick,
    fill=gray!15,
    rounded corners=1pt,
    inner sep=.1cm,
    align=left,
    text width=5.2cm
  },
  wire/.style={thick, -Latex},
  lab/.style={font=\small, inner sep=1pt}
]

% --- Blocks ---
\node[block] (plant) {
\vspace*{-.3cm}
\begin{align*}
\dot x_p(t) &= -M_p(x_p(t)) + B u_p(t)\\
y_p(t) &= x_p(t)
\end{align*}
};

\node[block, right=2cm of plant, yshift=-1.5cm] (opt) {
\vspace*{-.2cm}
\begin{align*}
0 &= -M_{\mathrm{opt}}(w(t)) + \mathcal{B}u_\mathrm{opt}(t) \\
y_\mathrm{opt}(t) &= \mathcal{B}^* w(t)
\end{align*}
};

% --- Coordinates for clean routing ---
\coordinate (plantE) at ($(plant.east)+(0.2,0)$);
\coordinate (optN)   at ($(opt.north)+(0,0.2)$);
\coordinate (optW)   at ($(opt.west)+(-0.2,0)$);
\coordinate (plantS) at ($(plant.south)+(0,-0.2)$);

% --- y_p -> optimizer input (horizontal, then vertical) ---
\draw[wire]
(plant.east) -| node[lab, above]
{$u_\mathrm{opt}(t) = y_p(t)$}
(opt.north);

% --- optimizer output -> plant input (horizontal, then vertical) ---
\draw[wire]
(opt.west) -| node[lab, below]
{$u_p(t)= P_F(-\tfrac{1}{\alpha}B^*  y_\mathrm{opt}(t)+u_\mathrm{ref})$}
(plant.south);

\end{tikzpicture}}
\end{center}
\caption{Coupling of dynamic plant with the algebraic optimality system as a differential-algebraic system.}
\label{fig:subopt_DAE}

\vspace*{-.2cm}
\end{figure}

Above in \eqref{eq:optimizer_plant_DAE} we observe that the optimizer block appears as an algebraic constraint. We will now show that one may also resolve this block using the properties of the optimality system. Precisely, from the injectivity of $M_{\mathrm{opt},0}$, see Proposition \ref{prop:OS_operator_inj}, we conclude that $M_{\mathrm{opt},0}^{-1}:\ran{ M_{\mathrm{opt},0}} \to W$ exists. Denote by
\begin{equation*}
    \pi_{\R^n} : W \to \R^n, \qquad 
    \pi_{\R^n}(x,(\lambda,\lambda_0)) = \lambda_0,
\end{equation*}
the projection onto the third component of $W$. 
Accordingly, the adjoint operator $\pi_{\R^n}^* : \R^n \to W$ embeds an element $x_0 \in \R^n$ into $W$ by setting the remaining components to zero. Together with Lemma \ref{lem:optimal_control}, we observe that the MPC-feedback law enjoys
%\textcolor{purple}{Adjoint of $\pi$ rather than transposed in the formula below?}
\begin{equation*}
    \mu(x_p(t)) \coloneqq P_F(\tfrac{1}{\alpha}B^* \pi_{\R^n} M_{\mathrm{opt},0}^{-1}(-\pi_{\R^n}^* x_p(t))+u_\mathrm{ref}).
\end{equation*}
This feedback leads to the closed-loop system 
\begin{equation*}
     \tfrac{\mathrm{d}}{\mathrm{d}t} x_p(t) = -M_p(x_p(t)) + B_pP_F(\tfrac{1}{\alpha}B^* \pi_{\R^n} M_{\mathrm{opt},0}^{-1}(-\pi_{\R^n}^* x_p(t))+u_\mathrm{ref}).
\end{equation*}
Note that these closed-loop dynamics are usually mainly of theoretical interest. In particular applications, the inverse appearing above is evaluated pointwise by solving an optimal control problem for each fixed initial state.

\section{Sample and hold in MPC: numerical splitting of coupled optimizer-plant dynamics} \label{sec:sampleholdMPC}
Having outlined the main idea of this section, we proceed as follows. In Subsection~\ref{subsec:lietrotter} we begin with a brief recapitulation of the Lie--Trotter splitting. Next, in Subsection~\ref{subsec:subopt_MPC} we apply this splitting to the coupled optimizer-plant dynamics \eqref{eq:optimizer_plant} for strictly positive time-scaling parameters and establish a one-to-one correspondence with the suboptimal MPC approach described in Algorithm~\ref{alg:subopt_MPC}. Finally, in Subsection~\ref{subsec:mpc_trotter_dae} we examine the relation between a Lie--Trotter-type splitting of the DAE \eqref{eq:optimizer_plant_DAE} and the classical MPC approach of Algorithm~\ref{alg:MPC}.
\subsection{Lie--Trotter splitting}\label{subsec:lietrotter}
We briefly recall the Lie--Trotter splitting as an approximation method for flows of dynamical systems from \cite{blanes2024splitting}. Let a dynamical system 
\begin{equation}\label{eq:auto_dyn_LT}
    \frac{\mathrm{d}}{\mathrm{d}t} v(t) = F(v(t)) = F_1(v(t)) + F_2(v(t)), \qquad v(0)=v_0,
\end{equation}
on a Hilbert space $V$ be given for some functions $F$, $F_1$, and $F_2$.
We assume that not only the flow associated with \eqref{eq:auto_dyn_LT} exists, but also the flows corresponding to the subproblems
\begin{equation*}
    \frac{\mathrm{d}}{\mathrm{d}t} v(t) = F_k(v(t)),\qquad v(0)=v_0, \qquad k = 1,2,
\end{equation*}
are well-defined. For $t \in [0, \infty)$, we denote these flows by $\varphi_t: V \to V$ and $\varphi^k_t: [0, \infty) \times V \to V$, $k=1,2$, respectively.
The \textit{Lie--Trotter splitting} approximates the exact flow $\varphi_h$ by
\begin{equation*}
    \varphi_h(v_0) \approx \varphi_h^2 \circ \varphi_h^1(v_0).
\end{equation*}
Here, $h>0$ denotes the splitting step size. To approximate the solution of \eqref{eq:auto_dyn_LT} on $[0,\infty)$, we partition the time interval into equidistant subintervals of length $h$. More precisely, we define $ t_n \coloneqq nh, n \in \mathbb{N}_0,$ with $t_0 = 0$, and successively apply the composition
$\varphi_h^2 \circ \varphi_h^1$ on each interval $[t_n,t_{n+1}]$.
The procedure is summarized in Algorithm~\ref{alg:lt}.
\begin{algorithm}
\caption{Lie--Trotter splitting.}\label{alg:lt}
\begin{algorithmic}[1]
\Statex \textbf{Given}: Initial state $v_0 = v(0)$, sampling time $h>0$.
\For{each $n = 0,1,2,\dots$}
\State Solve $\dot{v}_1 = F_1(v_1)$ with $v_1(t_n) = v_n$, on $[t_n, t_{n+1}]$.
\State Set $v_{n+1/2} = v_1(t_{n+1})$.
\State Solve $\dot{v}_2 = F_2(v_2)$ with $v_2(t_n) = v_{n+\frac{1}{2}}$, on $[t_n, t_{n+1}]$.
\State Set $x_{n+1} = y_2(t_{n+1})$.
\EndFor
\end{algorithmic}
\end{algorithm}

When it comes to numerical implementations, we have to discretize the flows $\varphi^k$ by appropriately chosen schemes for a step-size $h>0$ such as 
\begin{align}
    v_{n+1} &= \psi_h^{ke}(u_n) \coloneqq v_n + h F_k(v_n), 
    && \text{(explicit Euler)}, \label{eq:expl_euler}\\
    v_{n+1} &= \psi_h^{ki}(u_n) \coloneqq v_n + h F_k(v_{n+1}), 
    && \text{(implicit Euler)}.\label{eq:impl_euler}
\end{align}
for $k=1,2$.

\subsection{Suboptimal MPC is a Lie--Trotter splitting of interconnected dynamical systems}\label{subsec:subopt_MPC}
In this part, we apply a hybrid Lie--Trotter splitting to the coupled optimizer-plant dynamics \eqref{eq:optimizer_plant_epsi_ge0} and show that this corresponds to the suboptimal MPC algorithm~\ref{alg:subopt_MPC}. To this end, recall the continuous-time coupled optimizer-plant dynamics \eqref{eq:optimizer_plant_epsi_ge0} which, upon dividing the second block equation by $\varepsilon$ is
\begin{align}\label{eq:optimizer_plant_splitting}
\frac{\mathrm{d}}{\mathrm{d}t} \begin{pmatrix}
    x_p(t) \\ w(t)
\end{pmatrix}  &=-\begin{bmatrix}
    M_p(x_p(t)) \vphantom{ BP_F(\tfrac{1}{\alpha}B^*  \lambda_0(t))} \\ \tfrac{1}{\varepsilon}M_{\mathrm{opt},0}(w(t))  \vphantom{ \stack{0}{\stack{0}{x_p(t)}}}
\end{bmatrix} + \begin{bmatrix}
      BP_F(\tfrac{1}{\alpha}B^*  \lambda_0(t)+u_\mathrm{ref}) \\ \stack{0}{\stack{0}{-\tfrac{1}{\varepsilon}x_p(t)}}
\end{bmatrix}
\end{align}
For this fully coupled system, we have established in Theorem~\ref{thm:optimizer_plant} the existence of a suitable flow.
More precisely, let $S_{M+F}: V \to V$ be the nonlinear semigroup given by~\eqref{eq:optimizer_plant_semigroup} associated to the coupled optimizer-plant dynamics~\eqref{eq:optimizer_plant_epsi_ge0}. For $t \in [0, \infty)$, the flow $\varphi_t: V \to V$ is defined by 
\begin{equation*}
    \varphi_t(v_0) \coloneqq S_{M+F}(t)v_0.
\end{equation*}
To apply now a Lie--Trotter splitting to the system~\eqref{eq:optimizer_plant_epsi_ge0}, we do not decompose the system into the sum of diagonal and anti-diagonal part (that is, $M$ and $F$ as in \eqref{eq:optimizer_plant}), but rather split along the system interfaces, that is, into the two boxes of Figure~\ref{fig:subopt}. More precisely, we consider a decomposition of \eqref{eq:optimizer_plant_splitting} row-wise into
\begin{align}\label{eq:optimizer_plant_dec}
\frac{\mathrm{d}}{\mathrm{d}t} \begin{pmatrix}
    x_p(t) \\ w(t)
\end{pmatrix}  &=F_1\begin{pmatrix}
    x_p(t) \\ w(t)
\end{pmatrix} +F_2\begin{pmatrix}
    x_p(t) \\ w(t)
\end{pmatrix} , %\qquad w(0)=v_0, \qquad t\geq 0. 
\end{align}
where 
\begin{equation*}
     F_1\begin{pmatrix}
    x_p \\ w
\end{pmatrix}  \coloneqq \begin{bmatrix}
    -M_p(x_p) +  BP_F(\tfrac{1}{\alpha}B^*  \lambda_0+u_\mathrm{ref}) \\ 0 \vphantom{\stack{0}{\stack{0}{x_p}}}
\end{bmatrix} 
\end{equation*}
and 
\begin{equation*}
F_2\begin{pmatrix}
    x_p \\ w
\end{pmatrix} \coloneqq \begin{bmatrix}
   0 \vphantom{\tfrac{1}{\alpha}B^*} \\ -\tfrac{1}{\varepsilon} M_{\mathrm{opt},0}(w) -\tfrac{1}{\varepsilon} \stack{0}{\stack{0}{x_p}}
\end{bmatrix}.
\end{equation*}
It is a direct consequence of Theorem~\ref{thm:optimizer_plant} that also the flows associated with the subproblems 
\begin{equation*}
    \frac{\mathrm{d}}{\mathrm{d}t} \begin{pmatrix}
    x_p(t) \\ w(t)
\end{pmatrix}  = F_i\begin{pmatrix}
    x_p(t) \\ w(t)
\end{pmatrix} , \qquad i=1,2,
\end{equation*}
exist. We denote these flows by $\varphi_t^i: V \to V$ where
\begin{equation*}
    \varphi^i_t(v_0) \coloneqq S_{F_i}(t)v_0.
\end{equation*}
As explained in the preceding subsection, the Lie--Trotter splitting approximates the exact flow $\varphi$ by
\begin{equation*}
    \varphi_h(v_0) \approx \varphi_h^2 \circ \varphi_h^1(v_0).
\end{equation*}
Observe that in Step~4 of the suboptimal MPC approach of Algorithm~\ref{alg:subopt_MPC}, the optimal solution of the underlying optimal control problem \eqref{eq:red_OS} is approximated by an iterative solver. As an example, we consider the case where the iterative solver is given by
the \textit{proximal-point algorithm}, terminated after $j \in \mathbb{N}, j\geq 1$ iterations. The iterates are defined by
\begin{equation}\label{eq:prox_point}
     w^{(l+1)} = (I + \tfrac{h}{j\varepsilon} M_{\mathrm{opt},x_0})^{-1} w^{(l)}, \qquad l = 0, \ldots, j-1
\end{equation}
where $M_{\mathrm{opt},x_0}$ denotes the operator associated with the
optimality system \eqref{eq:red_OS}. We refer to \cite[Thm.~23.41]{BausComb2011} for the
convergence theory of the proximal-point method. We show that this method can be interpreted as an implicit Euler numerical approximation \eqref{eq:impl_euler} of $\varphi_t^2$, the exact flow associated with the optimizer dynamics (with trivial dynamics in the plant component). More general optimization schemes will be covered in Section~\ref{sec:opt}. Since, in real-world applications, the evolution of the plant is usually governed by the physical system and not by a numerical approximation, we treat $\varphi_t^1$ as an exact flow, see Step~1 of Algorithm \ref{alg:subopt_MPC}. Accordingly, we consider a Lie--Trotter step operator $\Psi_h^{j} : V \to V$ for some $j\in\mathbb{N}$ with $j\geq 1$ that is defined by
\begin{equation}\label{eq:one-step}
    \Psi_h^j\coloneqq  (\psi_{h/j}^{2i})^j \circ \varphi_h^1,
\end{equation}
where $\psi_{h/j}^{2i}$ denotes one implicit Euler step for the $F_2$-dynamics with step size $h/j$. In other words, we first evolve exactly under $F_1$ over a time interval of length $h$, and subsequently approximate the flow of $F_2$ over the same interval by $j$ implicit Euler substeps of size $h/j$, cf.~\eqref{eq:impl_euler}.
\begin{algorithm}[htb]
\caption{Lie--Trotter splitting with $j$ implicit Euler substeps.}
\label{alg:Lie_Trotter_subopt_MPC}
\begin{algorithmic}[1]
\Statex \textbf{Given}: Initial state $v_0 = v(0)$, sampling time $h>0$, truncation index $j\in \N$.
\For{each $n = 0,1,2,\dots$}
\State Solve $\dot{v}_1 = F_1(v_1)$ with $v_1(t_n) = v_n$, on $[t_n, t_{n+1}]$. \Comment{Propagate plant}
\State Set $v_{n+\frac{1}{2}} = v_1(t_{n+1})$.
\State Set $z^{(0)} = v_{n+\frac{1}{2}}$.
\For{$l = 0,\dots,j-1$} \Comment{Proximal gradient/imp.\ Euler for optimizer}
    \State $z^{(l+1)} = (I - \frac{h}{j} F_2)^{-1}z^{(l)}$
\EndFor
\State Set $v_{n+1} = z^{(j)}$.
\EndFor
\end{algorithmic}
\end{algorithm}
Clearly, other numerical approximations of the optimizer flow $\varphi_h^2$ can be employed as well, for instance based on the explicit Euler scheme \eqref{eq:expl_euler} or more advanced explicit-implicit schemes; these closely correspond to optimization schemes as illustrated in Section~\ref{sec:opt}. In what follows, we prove that Algorithm \ref{alg:Lie_Trotter_subopt_MPC}, coincides with the suboptimal MPC Algorithm~\ref{alg:subopt_MPC} with proximal-point iterative solver for the optimal control problem.
\begin{theorem}\label{thm:lt_prox}
Consider system \eqref{eq:control_system} with $f=-M_p$ and $g=B_p$. Fix a sampling time $h > 0$, and set a time-scaling $\varepsilon > 0$. Denote by $\varphi_t^1, \varphi_t^2$ the flows associated with the subproblems governed by $F_1,F_2$, respectively. For $v_n = (x_n, w_n) \in V$, define the one-step update 
\begin{equation}\label{eq:subopt_correspondence}
    v_{n+1} = \Psi_h^j(v_n),
\end{equation}
where $\Psi_h^j$ is given by \eqref{eq:one-step}. Then, the sequence $(v_n)_{n \ge 0}$ generated by the recursion \eqref{eq:subopt_correspondence} coincides with the sequence produced by Algorithm~\ref{alg:subopt_MPC} with $\tilde{f}=A, \tilde{g}=B$.
\end{theorem}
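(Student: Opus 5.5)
The proof is by induction on $n$, after unpacking the two substeps of $\Psi_h^j$ in \eqref{eq:one-step}. The identification is state by state: the plant component $x_n$ of $v_n$ is matched with $x_{p,n}$ of Algorithm~\ref{alg:subopt_MPC}. The optimizer component $w_n=(x^{(j)},(\lambda^{(j)},\lambda_0^{(j)}))$ is matched with the $j$-th proximal-point iterate \eqref{eq:prox_point}, taken for the optimal control problem with initial value $x_0=x_{p,n+1}$ and warm-started at the previous iterate. The applied control is $u^j(0)$, which we read as $P_F(\tfrac1\alpha B^*\lambda_0^{(j)}+u_\mathrm{ref})$ via Lemma~\ref{lem:optimal_control} and \eqref{eq:opt_control}, using $\lambda^{(j)}(0)=\lambda_0^{(j)}$ from the domain of $\calC^*$.

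\textbf{Plant substep.} Under $F_1$ the $w$-component has zero derivative, so $w$, and in particular $\lambda_0$, stays constant on $[t_n,t_{n+1}]$. The $x_p$-component therefore solves
\[
\dot x_p=-M_p(x_p)+B_pP_F(\tfrac1\alpha B^*\lambda_0^{n}+u_\mathrm{ref})
\]
with a constant input. This is exactly Step~4 of Algorithm~\ref{alg:subopt_MPC} with the zero-order hold $u_p\equiv u^j(0)$ from Step~3. Hence $\varphi_h^1(x_n,w_n)=(x_p(h;x_n,u_p),w_n)$, and the existence and uniqueness of this flow follow from Theorem~\ref{thm:optimizer_plant}.

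\textbf{Optimizer substep.} Let $z^{(l)}=(\xi^{(l)},\omega^{(l)})$. Since $F_2$ has zero first component, the implicit Euler equation $z^{(l+1)}-\tfrac{h}{j}F_2(z^{(l+1)})=z^{(l)}$ forces $\xi^{(l+1)}=\xi^{(l)}=x_{n+1}$. Its second component becomes
\[
\omega^{(l+1)}+\tfrac{h}{j\varepsilon}\Big(M_{\mathrm{opt},0}(\omega^{(l+1)})+\stack{0}{\stack{0}{x_{n+1}}}\Big)=\omega^{(l)}.
\]
By the identity $M_{\mathrm{opt},x_0}=M_{\mathrm{opt},0}+(0,(0,x_0))$ with $x_0=x_{n+1}$, the bracket equals $M_{\mathrm{opt},x_{n+1}}(\omega^{(l+1)})$. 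This is precisely $\omega^{(l+1)}=(I+\tfrac{h}{j\varepsilon}M_{\mathrm{opt},x_{n+1}})^{-1}\omega^{(l)}$, i.e.\ \eqref{eq:prox_point}.

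The resolvent is single-valued and everywhere defined because $M_{\mathrm{opt},x_0}$ is maximally monotone (Proposition~\ref{prop:OS_operator_inj}), via Minty's theorem (footnote to Definition~\ref{def:monotonicity}). Hence each substep of Algorithm~\ref{alg:Lie_Trotter_subopt_MPC} is well defined and unique. After $j$ substeps the optimizer state is the $j$-th proximal iterate for the problem with initial value $x_{p,n+1}$, and the control fed to the next plant substep is $u^j(0)$.

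The main obstacle is bookkeeping rather than analysis. First, the sign conventions must be checked: the optimizer input enters as $-\mathcal B u_\mathrm{opt}$ and $y_\mathrm{opt}=-\lambda_0$, and these must reproduce the $+x_0$ in $M_{\mathrm{opt},x_0}$ and the control $P_F(\tfrac1\alpha B^*\lambda_0+u_\mathrm{ref})$. Second, Algorithm~\ref{alg:subopt_MPC} must be interpreted consistently: with the proximal-point solver, warm-started from the previous optimizer state $w_n$, and with the control $u^j(0)$ computed for the problem at $x_{p,i}$ and applied on the next interval. This index shift means the induction hypothesis should be stated as \emph{$v_n=(x_{p,n},w_n)$, where $w_n$ is the optimizer state used to generate $u_p$ on $[t_n,t_{n+1}]$}. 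With that convention fixed, the base case is the common initialization $v_0$, and the inductive step is the two computations above.
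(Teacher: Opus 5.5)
Your proposal is correct and follows the paper's proof. Under $F_1$ you freeze $w$ to recover the sample-and-hold plant step with $u_p\equiv P_F(\tfrac1\alpha B^*\lambda_{0n}+u_\mathrm{ref})$; then you freeze $x_p$ in the implicit Euler substeps for $F_2$ and use $M_{\mathrm{opt},x_0}=M_{\mathrm{opt},0}+(0,(0,x_0))$ to identify them with the proximal-point iteration \eqref{eq:prox_point}. Your extra care is sound: single-valuedness of the resolvent via maximal monotonicity, the propagated state $x_{p,n+1}$ in the proximal step, and the warm-start/grouping convention. One slip to fix: in your first paragraph $w_n$ should be matched with the iterate for $x_0=x_{p,n}$, not $x_{p,n+1}$, as your final induction hypothesis already says.
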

\begin{proof}
    By definition of the flow $\varphi_h^1$, we have that $v_{n+1/2}$ is the solution of the differential equation
    \begin{align}\label{eq:F_1}
        \frac{\mathrm{d}}{\mathrm{d}t} \begin{pmatrix}
    x_p(t) \\ w(t)
\end{pmatrix}  =  \begin{bmatrix}
    -M_p(x_p(t)) +  BP_F(\tfrac{1}{\alpha}B^*  \lambda_0(t)+u_\mathrm{ref}) \\ 0
\end{bmatrix}, \qquad v(t_n)=v_n 
    \end{align}
    at time $t_{n+1}$. Clearly, the second line of \eqref{eq:F_1} yields that the $W$-component of $\varphi_h^1(v_n)$ coincides with the initial value $w_n=(x_n, (\lambda_n,\lambda_{0n}))$. Consequently, \eqref{eq:F_1} reduces to 
     \begin{align}\label{eq:F_1_reduced}
        \frac{\mathrm{d}}{\mathrm{d}t} 
    x_p(t)= -M_p(x_p(t)) +  BP_F(\tfrac{1}{\alpha}B^*  \lambda_{0n}+u_\mathrm{ref}) 
\qquad x_p(t_n)=x_{pn}, \quad t \in [t_n, t_{n+1}].
    \end{align}
    The preceding differential equation has a unique solution $x_p(t)$ on $[t_n,t_{n+1}]$ and the end point evaluation $x_p(t_{n+1})$ coincides with the current state measurement in the Step~4 of Algorithm~\ref{alg:subopt_MPC} under the constant suboptimal control $\mu(x_{pn})=P_F(\tfrac{1}{\alpha}B^*  \lambda_{0n}+u_\mathrm{ref})$. Accordingly, the flow $\varphi_t^2$ denotes the unique solution of 
    \begin{align}\label{eq:F_2}
        \frac{\mathrm{d}}{\mathrm{d}t} \begin{pmatrix}
    x_p(t) \\ w(t)
\end{pmatrix}  =  \begin{bmatrix}
    0 \\  -\tfrac{1}{\varepsilon} M_{\mathrm{opt},0}(w(t)) -\tfrac{1}{\varepsilon} \stack{0}{\stack{0}{x_p(t)}}
\end{bmatrix}, \qquad v(t_n)=v_{n+1/2},
    \end{align}
    for  $t \in [t_n, t_{n+1}].$
The Steps 5-7 in Algorithm \ref{alg:Lie_Trotter_subopt_MPC} approximate this flow numerically via the implicit Euler scheme: set $z^{(0)} = v_{n+\frac{1}{2}}$ and compute for $l = 0,\dots,j-1$ the solution of the equation 
\begin{equation}\label{eq:impl_Euler_for_F_2}
    z^{(l+1)} = z^{(l)} + \frac{h}{j} F_2 \bigl(z^{(l+1)}\bigr) .   
\end{equation}
Write $z^{(l)}=(x_p^{(l)}, w^{(l)}) \in \R^n \times W$. Since $F_2$ maps everything to zero in the $\R^n$-component, see also \eqref{eq:F_2}, the iterates $x_p^{(l)}$ coincide with the initial value $x_{pn}$ for all $l \in \{0,\dots,j-1\}$. Consequently, \eqref{eq:impl_Euler_for_F_2} reduces to 
\begin{align}\label{eq:F_2_reduced}
       w^{(l+1)} = w^{(l)} - \frac{h}{j\varepsilon}  M_{\mathrm{opt},0}\bigl(w^{(l+1)}\bigr) - \frac{h}{j\varepsilon}  \stack{0}{\stack{0}{x_{pn}}} =  w^{(l)} - \frac{h}{j\varepsilon}  M_{\mathrm{opt},x_{pn}}\bigl(w^{(l+1)}\bigr) .
    \end{align}
    We rewrite this update rule via \eqref{eq:prox_point} with $x_0 = x_{pn}$,
    which is exactly the proximal-point algorithm.
\end{proof}

\subsection{MPC is a Lie--Trotter-like splitting of dynamical-algebraic systems}\label{subsec:mpc_trotter_dae}
In this subsection, we consider the DAE \eqref{eq:optimizer_plant_DAE}, that is, we assume that the optimality system is solved instantaneously as in classical MPC. In this case, the splitting scheme as in the preceding subsection may not be employed as the second system does not give rise to a flow as it is modeled by an algebraic equation. To resolve this issue, we define the set-valued operator $M_0 : V \rightrightarrows V$ with 
   \begin{equation*}
       M_0 = \left\{ \left(\begin{bmatrix}
            x_p \vphantom{M_p(x_p)}  \\w \vphantom{ M_{\mathrm{opt},0}(w)}
\end{bmatrix} , \begin{bmatrix}
            v_p \vphantom{M_p(x_p)}  \\z \vphantom{ M_{\mathrm{opt},0}(w)}
\end{bmatrix}  \right) \in V \times V : \begin{bmatrix}
            v_p \vphantom{M_p(x_p)}  \\0 \vphantom{ \stack{0}{\stack{0}{-x_p}}M_{\mathrm{opt},0}(w)}
\end{bmatrix}   = \begin{bmatrix}
    M_p(x_p) \vphantom{ BP_F(\tfrac{1}{\alpha}B^*  \lambda_0)} \\ M_{\mathrm{opt},0}(w)  \vphantom{ \stack{0}{\stack{0}{x_p}}}
\end{bmatrix} - \begin{bmatrix}
      BP_F(\tfrac{1}{\alpha}B^*  \lambda_0+u_\mathrm{ref}) \\ \stack{0}{\stack{0}{-x_p}}
\end{bmatrix}  \right\} .
   \end{equation*}
   Note that
   \begin{equation*}
       \dom{M_0} = \left\{\begin{bmatrix}
            x_p \vphantom{M_p(x_p)}  \\w \vphantom{ M_{\mathrm{opt},0}(w)}
\end{bmatrix} \in  V : 0=M_{\mathrm{opt},x_p}(w)  \right\},
   \end{equation*}
that is, in the domain of this operator are the consistent tuples $(x_p,w)$ for which $w$ corresponds to the optimal solution emanating from initial value $x_p$.

Using this operator, we may rewrite the coupled optimizer-plant DAE \eqref{eq:optimizer_plant_DAE} as
\begin{equation*}
    \frac{\mathrm{d}}{\mathrm{d}t} \begin{pmatrix}
    x_p(t) \\ w(t)
\end{pmatrix} + M_0\begin{pmatrix}
    x_p(t) \\ w(t)
\end{pmatrix} \ni 0.
\end{equation*}
We decompose $M_0=F_1 + F_2$ where $F_1$ is as in \eqref{eq:optimizer_plant_dec} and  $F_2: V \rightrightarrows V$ is defined by
\begin{equation*}
     F_2 = \left\{ \left(\begin{bmatrix}
            x_p \vphantom{M_p(x_p)}  \\w \vphantom{ M_{\mathrm{opt},0}(w)}
\end{bmatrix} , \begin{bmatrix}
            v_p \vphantom{M_p(x_p)}  \\z \vphantom{ M_{\mathrm{opt},0}(w)}
\end{bmatrix}  \right) \in V \times V : \begin{bmatrix}
            v_p \vphantom{M_p(x_p)}  \\0 \vphantom{ \stack{0}{\stack{0}{-x_p}}M_{\mathrm{opt},0}(w)}
\end{bmatrix}   = \begin{bmatrix}
    0 \\ M_{\mathrm{opt},0}(w) -\stack{0}{\stack{0}{-x_p}}
\end{bmatrix}   \right\} ,
\end{equation*}
with $\dom{F_2} = \dom{M_0}$. Consider the two subproblems
\begin{equation*}
    \frac{\mathrm{d}}{\mathrm{d}t} \begin{pmatrix}
    x_p(t) \\ w(t)
\end{pmatrix} + F_i \begin{pmatrix}
    x_p(t) \\ w(t)
\end{pmatrix} \ni 0, \qquad i=1,2,
\end{equation*}
Again, due to Theorem~\ref{thm:homogeneous_Cauchy} there exists a flow $\varphi_t^1: V \to V$ where $ \varphi^1_t(v_0) \coloneqq S_{F_1}(t)v_0$. We denote by $r: V \to V$ the mapping
\begin{equation}\label{eq:defnf}
    r (v) = Q  M_{\mathrm{opt},0}^{-1}(P v),
\end{equation}
where $P: V \to V$, $Q: W \to V$ are given by
\begin{equation*}
    P \stack{x_p}{w} = \stack{0}{\stack{0}{x_{p}}}, \qquad Q w = \stack{0}{w}.
\end{equation*}
The mapping $r$ defined in \eqref{eq:defnf} corresponds to the instantaneous solution of the optimal control problem, that is, it resolves the algebraic constraint. Consequently, we obtain in Algorithm~\ref{alg:Lie_Trotter_MPC} the splitting scheme for the optimizer-plant DAE. 

\begin{algorithm}[htb]
\caption{Lie--Trotter type algorithm.}
\label{alg:Lie_Trotter_MPC}
\begin{algorithmic}[1]
\Statex \textbf{Given}: Initial state $v_0 = v(0)$, sampling time $h>0$.
\For{each $n = 0,1,2,\dots$}
\State Solve $\dot{v}_1 = F_1(v_1)$ with $v_1(t_n) = v_n$, on $[t_n, t_{n+1}]$.
\State Set $v_{n+\frac{1}{2}} = v_1(t_{n+1})$.
\State Set $v_{n+1} = r(v_{n+\frac{1}{2}})$ with $r$ given by \eqref{eq:defnf}.
\EndFor
\end{algorithmic}
\end{algorithm}
\begin{theorem}
Let in system \eqref{eq:control_system} be $f=-M_p$ and $g=B_p$. Fix a sampling time $h > 0$, and let $r$ be given by \eqref{eq:defnf}.  Denote by $\varphi_t^1$ the flow associated with the subproblem governed by $F_1$. For $v_n = (x_n, w_n) \in V$, define the one-step update 
\begin{equation}
    v_{n+1} = r \circ \varphi_h^1(v_n),
\end{equation}
that is, we perform Algorithm~\ref{alg:Lie_Trotter_MPC} with $t_{n+1} = t_n + h$. Then, the sequence $(v_n)_{n \ge 0}$ generated by this recursion coincides with the sequence produced by the MPC approach of Algorithm~\ref{alg:MPC} applied to \eqref{eq:MPC_OCP} with $\tilde{f}=A, \tilde{g}=B$.
\end{theorem}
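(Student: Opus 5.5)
I will mirror the proof of Theorem~\ref{thm:lt_prox} and argue by induction over the sampling intervals. The one-step map $r\circ\varphi_h^1$ has two stages, and I will show that each stage reproduces one step of Algorithm~\ref{alg:MPC}. Concretely, I take as induction hypothesis that $v_n=(x_{pn},w_n)$ with $w_n=(x_n,(\lambda_n,\lambda_{0n}))$. Here $x_{pn}$ is the plant state $x_{p,n}$ of Algorithm~\ref{alg:MPC}, and $w_n$ solves the reduced optimality system~\eqref{eq:red_OS} with initial value $x_0=x_{p,n}$, i.e.\ $w_n = M_{\mathrm{opt},0}^{-1}(-\pi_{\R^n}^*x_{pn})$. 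This requires the initial datum to be consistent, that is, $v_0\in\dom{M_0}$, or equivalently $w_0$ is the optimal solution for $x_{p,0}$. If $v_0$ is not consistent, I shift the indexing by half a step, so that the algorithm begins with an application of $r$.

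\textbf{Plant stage.} Exactly as in~\eqref{eq:F_1}--\eqref{eq:F_1_reduced}, the $W$-component of $F_1$ vanishes, so $w$ is frozen at $w_n$ on $[t_n,t_{n+1}]$. The plant component then solves $\dot x_p=-M_p(x_p)+B_pP_F(\tfrac1\alpha B^*\lambda_{0n}+u_\mathrm{ref})$ with a constant input. I need to identify this input with $u^\star(0)$ from Step~3 of Algorithm~\ref{alg:MPC}. By Lemma~\ref{lem:optimal_control}, \eqref{eq:opt_control} gives $u^\star=P_F(\tfrac1\alpha B^*\lambda^\star+u_\mathrm{ref})$, and $\lambda^\star(0)=\lambda_0^\star$ by the boundary condition in $\dom{\calC^*}$. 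Hence the held value $u^\star(0)$ equals $P_F(\tfrac1\alpha B^*\lambda_{0n}+u_\mathrm{ref})$, read as the pointwise evaluation at $\tau=0$. The solution is unique by the standing well-posedness assumption, so the plant component of $v_{n+1/2}$ equals $x_{p,n+1}$ from Step~4.

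\textbf{Optimizer stage.} The map $r$ in~\eqref{eq:defnf} keeps the plant component and replaces $w$ by $M_{\mathrm{opt},0}^{-1}(P v_{n+1/2})$. I have to be careful with the sign convention here. Since $M_{\mathrm{opt},x_0}=M_{\mathrm{opt},0}+\pi_{\R^n}^*x_0$, the condition $0=M_{\mathrm{opt},x_{p,n+1}}(w)$ reads $M_{\mathrm{opt},0}(w)=-\pi_{\R^n}^*x_{p,n+1}$. So I will either interpret $P$ with the sign matching the definition of $\dom{M_0}$ and of $\mu$, or note that the sign slip is harmless. Proposition~\ref{prop:OS_operator_inj} then gives uniqueness of $w_{n+1}$. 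Existence, i.e.\ that $-\pi^*_{\R^n}x_{p,n+1}\in\ran{M_{\mathrm{opt},0}}$, follows from the existence of the optimal pair and its multiplier in~\eqref{eq:os}, combined with Lemma~\ref{lem:optimal_control}. Thus $w_{n+1}$ is the optimality-system solution for initial value $x_{p,n+1}$, which is exactly what Step~2 of Algorithm~\ref{alg:MPC} computes at iteration $n+1$. This closes the induction.

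The main obstacle is not computational. It is making the identification rigorous at two points. The first is the pointwise evaluation $u^\star(0)$ of an $L^2$ control: since $\lambda^\star\in H^1$ is continuous and $P_F$ may act nonlocally on $L^2$, I would argue via the representative given by~\eqref{eq:opt_control} and note that this evaluation is implicit in Algorithm~\ref{alg:MPC}. The second is the sign and consistency bookkeeping in $r$, which I would settle before running the induction.
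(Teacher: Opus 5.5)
Your argument follows the same two-stage route as the paper's proof. You freeze $w$ under $F_1$ and identify the plant propagation with Steps~3--4 of Algorithm~\ref{alg:MPC}, then resolve the algebraic constraint via $r$. Your explicit induction hypothesis $v_n\in\dom{M_0}$ and the existence/uniqueness argument via \eqref{eq:os}, Lemma~\ref{lem:optimal_control} and Proposition~\ref{prop:OS_operator_inj} make explicit what the paper leaves implicit. You are also right that the statement needs $v_0\in\dom{M_0}$.

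However, two of your hedges must be resolved one specific way, because the other option fails. The sign slip is \emph{not} harmless. With $+Pv$, the map $r$ solves the problem for initial value $-x_{p,n+1}$. Already in the unconstrained case with $x_\mathrm{ref}=0$, $u_\mathrm{ref}=0$, the solution map $x_0\mapsto(x^\star,\lambda^\star,\lambda_0^\star)$ is linear, so the next held input would be $-u^\star(0)$ instead of $u^\star(0)$. Moreover, $r$ as written does \emph{not} keep the plant component: $Qw=(0,w)$ sets it to zero, so the literal recursion restarts the plant at $0$. You should state the corrected map $r(v)=(x_p,M_{\mathrm{opt},0}^{-1}(-\pi_{\R^n}^*x_p))$ explicitly. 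This is also what makes your half-step shift produce $r(v_0)\in\dom{M_0}$. Likewise, $F_1$ contains $B$ while Step~4 of Algorithm~\ref{alg:MPC} propagates with $B_p$, so you are tacitly assuming $B=B_p$.

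The identification of the held input with $u^\star(0)$ is the one genuinely missing hypothesis; the paper's proof also passes over it silently. ``Arguing via the representative'' cannot supply it for a general closed convex $F\subset L^2(0,t_f;\R^m)$. If $P_F$ is nonlocal (e.g.\ $F$ a zero-mean constraint), then $(P_Fg)(0)$ depends on all of $g$. It is therefore not a function of $\lambda_0^\star$ alone, and $P_F(\tfrac1\alpha B^*\lambda_{0n}+u_\mathrm{ref})$ with a vector argument is not even an element of $\R^m$.

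What closes the step is the assumption the coupling \eqref{eq:coupling} already presupposes: $F=\{u: u(\tau)\in U \text{ for a.e. } \tau\}$ for a closed convex $U\subset\R^m$, so that $P_F$ acts pointwise as $P_U$. Since $\lambda^\star\in H^1(0,t_f;\R^n)\subset C([0,t_f];\R^n)$ and $P_U$ is nonexpansive, $u^\star$ then has the continuous representative $\tau\mapsto P_U(\tfrac1\alpha B^*\lambda^\star(\tau)+u_\mathrm{ref})$. Its value at $\tau=0$ is $P_U(\tfrac1\alpha B^*\lambda_0^\star+u_\mathrm{ref})$ by $\lambda^\star(0)=\lambda_0^\star$. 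With this hypothesis and the corrected $r$, your induction is a complete proof.
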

\begin{proof}
    By definition of the flow $\varphi_h^1$, we have that $v_{n+1/2}$ denotes the solution of the differential equation
    \begin{align}\label{eq:F_1MPC}
        \frac{\mathrm{d}}{\mathrm{d}t} \begin{pmatrix}
    x_p(t) \\ w(t)
\end{pmatrix}  =  \begin{bmatrix}
    -M_p(x_p(t)) +  BP_F(\tfrac{1}{\alpha}B^*  \lambda_0(t)) \\ 0
\end{bmatrix}, \qquad v(t_n)=v_n, \quad t \in [t_n, t_{n+1}]
    \end{align}
    at time $t_{n+1}$. Clearly, the second line of \eqref{eq:F_1} yields that the $W$-component of $\varphi_h^1(v_n)$ coincides with the initial value $w_n=(x_n, (\lambda_n,\lambda_{0n}))$. Consequently, \eqref{eq:F_1} reduces to 
     \begin{align}\label{eq:F_1reducedMPC}
        \frac{\mathrm{d}}{\mathrm{d}t} 
    x_p(t)= -M_p(x_p(t)) +  BP_F(\tfrac{1}{\alpha}B^*  \lambda_{0n}) 
\qquad x_p(t_n)=x_{pn}, \quad t \in [t_n, t_{n+1}].
    \end{align}
    The preceding differential equation has a unique solution $x_p(t)$ on $[t_n,t_{n+1}]$ and the end point evaluation $x_p(t_{n+1})$ coincides with the current state measurement in the Step~4 of Algorithm~\ref{alg:MPC}. Accordingly, we consider the differential-algebraic equation
    \begin{align}\label{eq:F_2MPC}
       \begin{bmatrix}
    I & \\ & 0
\end{bmatrix} \frac{\mathrm{d}}{\mathrm{d}t} \begin{pmatrix}
    x_p(t) \\ w(t)
\end{pmatrix}  =  \begin{bmatrix}
    0 \\  -M_{\mathrm{opt},0}(w(t)) + \stack{0}{\stack{0}{x_p(t)}}
\end{bmatrix}, \qquad v(t_n)=v_{n+1/2}, \quad t \in [t_n, t_{n+1}].
    \end{align}
 Using the same argumentation as above, the first line of \eqref{eq:F_2} yields that the $\R^n$-component of $\varphi_h^2(v_{n+1/2})$ coincides with the initial value $x_{pn}$. Consequently, \eqref{eq:F_2} reduces to 
     \begin{align}\label{eq:F_1_reducedMPC}
        0=  - M_{\mathrm{opt},0}(w(t)) +\stack{0}{\stack{0}{x_{pn}}}
\qquad w(t_n)=w_{n+1/2}, \quad t \in [t_n, t_{n+1}]
    \end{align}
    and the mapping $r$ (instantaneously) assigns to the initial condition $v_{n+1/2}=(x_{p(n+1/2)}, w_{n+1/2})$ the unique solution of the optimal control problem \eqref{eq:oc_with_dynamics}.
\end{proof}
\section{Numerical approximation of optimizer flows -- optimization algorithms}\label{sec:opt}

In the previous section, we introduced the hybrid Lie–Trotter splitting of the coupled optimizer–plant dynamics \eqref{eq:optimizer_plant_dec} and related it to the suboptimal MPC algorithm \ref{alg:subopt_MPC}. In particular, in Theorem~\ref{thm:lt_prox} we have connected a Lie--Trotter approximation with a suboptimal MPC scheme, in which the optimal control problem is solved with a proximal-point method for the optimality system. In this section, we will broaden the scope for the optimal control solver and discuss more general optimization methods; these methods closely correspond to the numerical approximation of the subsystem governed by $F_2$ in \eqref{eq:optimizer_plant_dec} encoding the flow of the optimizer.

That is, in this section we consider different splitting schemes for the bottom right box in Figure~\ref{fig:num_approx} as a more general framework compared to Figure~\ref{fig:subopt}. In contrast to the previous sections, we do not consider the control-reduced optimality system \eqref{eq:red_OS}. Instead, we study the gradient flow associated with the optimality system \eqref{eq:os}, shown in the bottom-right box. In the presence of control constraints, this leads to a set-valued equation, and consequently, the resulting optimizer dynamics are also set-valued; see Figure~\ref{fig:num_approx}. This generalization is essential for capturing modern first-order and operator-splitting methods within a unified dynamical framework and for their inclusion in suboptimal MPC schemes.

\begin{figure}[htb]
\begin{center}
    \scalebox{0.9}{\begin{tikzpicture}[
  >=Latex,
  font=\small,
  block/.style={
    draw,
    thick,
    fill=gray!15,
    rounded corners=1pt,
    inner sep=.1cm,
    align=left,
    text width=6.2cm
  },
  wire/.style={thick, -Latex},
  lab/.style={font=\small, inner sep=1pt}
]

% --- Blocks ---
\node[block] (plant) {
\vspace*{-.3cm}
\begin{align*}
\dot x_p(t) &= -M_p(x_p(t)) + B_p u_p(t)\\
y_p(t) &= x_p(t)
\end{align*}
};

\node[block, right=1.5cm of plant, yshift=-1.5cm] (opt) {
\vspace*{-.2cm}
\begin{align*}
\varepsilon \dot w(t) &\in -\calA(w(t)) - \partial g(w(t)) +\mathcal{B}u_\mathrm{opt}(t) \\
y_\mathrm{opt}(t) &= \mathcal{B}^* w(t)
\end{align*}
};

% --- Coordinates for clean routing ---
\coordinate (plantE) at ($(plant.east)+(0.2,0)$);
\coordinate (optN)   at ($(opt.north)+(0,0.2)$);
\coordinate (optW)   at ($(opt.west)+(-0.2,0)$);
\coordinate (plantS) at ($(plant.south)+(0,-0.2)$);

% --- y_p -> optimizer input (horizontal, then vertical) ---
\draw[wire]
(plant.east) -| node[lab, above]
{$u_\mathrm{opt}(t) = y_p(t)$}
(opt.north);

% --- optimizer output -> plant input (horizontal, then vertical) ---
\draw[wire]
(opt.west) -| node[lab, below]
{$u_p(t)= P_F(-\tfrac{1}{\alpha}B^*  y_\mathrm{opt}(t)+u_\mathrm{ref})$}
(plant.south);

\end{tikzpicture}}
\end{center}
\caption{The bottom right box defines a continuous-time (set-valued) control system. We highlight the splitting into a single-valued part $\calA$ and a subgradient $\partial g$ and apply modern splitting-based numerical algorithms for sums of monotone operators onto the optimizer dynamcis.}
\label{fig:num_approx}
\vspace*{-.2cm}
\end{figure}

The relation between splitting schemes for differential inclusions and iterative optimization algorithms is well established in monotone operator theory and convex optimization, see \cite[Ch. 26]{BausComb2011}. Nevertheless, this viewpoint is particularly useful in the present setting, since it allows us to interpret a broad class of optimization algorithms as numerical approximations of the optimizer dynamics coupled to the plant. In particular, the specific structure of the optimizer subsystem arising from the optimality system enables a direct interpretation of splitting-based optimization algorithms within the proposed MPC framework.

Due to the optimality condition \eqref{eq:os}, solving the optimal control problem at each MPC step is equivalent to computing a zero of the associated optimality system. A~key observation is that the optimality system \eqref{eq:os} possesses a natural sum structure consisting of a single-valued monotone part and a generally set-valued nonsmooth part. This structure forms the basis for the splitting approaches considered below.

More precisely, for a given initial value $x_0 \in \R^n$, the optimality system \eqref{eq:os} can be written in the form
\begin{equation}\label{eq:os2}
\text{find }  \begin{pmatrix}
         z \\ p
      \end{pmatrix} \text{ such that} \qquad \qquad \qquad 0 \in 
 \calA \begin{pmatrix}
          z \\ p
      \end{pmatrix} +  \calS \begin{pmatrix}
         z \\ p
      \end{pmatrix} + \stack{0}{\stack{0}{x_0}}.
\end{equation}

To emphasize this decomposition and to cover a broader class of structured optimization problems, we adopt the following abstract framework throughout this section:
\begin{enumerate}[label=(\roman*)]
    \item $\calA: W \supset \dom{\calA} \to W$ is a maximally monotone operator representing the single-valued part of the optimality system;
    \item $\calS = \partial g \subset W \times W$ is the subdifferential of a proper, convex, and lower semi-continuous function $g: W \to \R \cup \{+\infty\}$ and models the nonsmooth and constraint-related part;
    \item $b \in W$ denotes a given forcing term.
\end{enumerate}

In the particular case of \eqref{eq:os}, the nonsmooth component arises from the control constraints through the indicator function $g(z,p)=i_F(u)$.

Motivated by \eqref{eq:F_2}, we consider the flow associated with the inclusion \eqref{eq:os2}, i.e., we introduce the dynamical system
\begin{equation}\label{eq:gradient_flow_for_os}
      \varepsilon \frac{\mathrm{d}}{\mathrm{d}t} \begin{pmatrix}
          z(t) \\ p(t)
      \end{pmatrix} \in  -
 \calA \begin{pmatrix}
          z(t) \\ p(t)
      \end{pmatrix} -  \calS \begin{pmatrix}
          z(t) \\ p(t)
      \end{pmatrix} - b
 \qquad\begin{pmatrix}
          z(0) \\ p(0)
      \end{pmatrix} = \begin{pmatrix}
          z_0 \\ p_0
      \end{pmatrix}
\end{equation}
where $\varepsilon > 0$ denotes a time scaling. Note that, to cover \eqref{eq:os}, therein the function $g$ maps $(z,p)=((x,u),p)$ to $g(z,p)=i_F(u)$ so that the control component of $\calS$ contains the subgradient $\partial i_F$ and is zero everywhere else.

We briefly recall some preliminaries from convex analysis before discussing particular algorithms emerging as splitting and discretization schemes of the gradient flow \eqref{eq:gradient_flow_for_os}. Let $\calM: W \rightrightarrows W$ be a possibly set-valued operator and $\gamma > 0$. Define its \textit{resolvent at $\gamma$} by 
\begin{equation*}
    J_{\gamma \calM} \coloneqq (I + \gamma \calM )^{-1}
\end{equation*}
and its \textit{Yosida approximation of index $\gamma$} by
\begin{equation*}
    \calM_\gamma  \coloneqq \tfrac{1}{\gamma}(I-J_{\gamma \calM}).
\end{equation*}
In the case $\calM= \partial g$, i.e., if the $\calM$ is a subgradient, its resolvent at $\gamma > 0$ is also denoted by the $\prox$-operator
\begin{equation*}
    \prox_{\gamma g} \coloneqq J_{\gamma \partial g}.
\end{equation*}
\begin{proposition}[{\cite[Cor. 23.11]{BausComb2011}}]
    Let $\calM:W \rightrightarrows W$ be maximally monotone and $\gamma > 0$. Then, the following assertions hold:
    \begin{enumerate}[label=(\roman*)]
        \item the resolvent $J_{\gamma \calM}: W \to W$ and $I-J_{\gamma \calM}: W \to W$ are firmly nonexpansive and maximally monotone;
        \item $\calM_\gamma $ is maximally monotone;
        \item $\calM_\gamma $ is firmly nonexpansive;
        \item $ \zer \calM = \zer{\calM_\gamma}=\operatorname{Fix} (J_{\gamma \calM}) \eqqcolon  \left\lbrace w \in W : J_{\gamma \calM}(w)=w \right\rbrace $.
    \end{enumerate}
\end{proposition}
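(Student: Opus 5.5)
The plan is to derive all four assertions from the basic characterization of the resolvent through the graph of $\calM$, together with Minty's theorem. Minty's theorem says that for maximally monotone $\calM$ and $\gamma>0$ we have $\ran(I+\gamma\calM)=W$; this is the footnote equivalence in Definition~\ref{def:monotonicity}, applied to the maximally monotone operator $\gamma\calM$.

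First we show that $J_{\gamma\calM}$ is single-valued and everywhere defined. Let $x_i\in (I+\gamma\calM)(p_i)$ for $i=1,2$, and write $x_i=p_i+\gamma v_i$ with $v_i\in\calM(p_i)$. Monotonicity gives $\langle v_1-v_2,p_1-p_2\rangle\ge 0$, hence
\begin{equation*}
\langle x_1-x_2,p_1-p_2\rangle=\|p_1-p_2\|^2+\gamma\langle v_1-v_2,p_1-p_2\rangle\ge\|p_1-p_2\|^2 .
\end{equation*}
Taking $x_1=x_2$ forces $p_1=p_2$, so $J_{\gamma\calM}$ is single-valued. Surjectivity of $I+\gamma\calM$ (Minty) gives $\dom{J_{\gamma\calM}}=W$. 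The same inequality, read with $p_i=J_{\gamma\calM}x_i$, is exactly firm nonexpansiveness of $J_{\gamma\calM}$, in the same form as \eqref{eq:P_F_firmly_nonexp}.

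Next we treat $I-J_{\gamma\calM}$. Expanding $\|x_1-x_2\|^2=\|(J x_1-Jx_2)+((I-J)x_1-(I-J)x_2)\|^2$ (writing $J=J_{\gamma\calM}$), firm nonexpansiveness of $J$ is equivalent to
\begin{equation*}
\langle (I-J)x_1-(I-J)x_2,\,Jx_1-Jx_2\rangle\ge 0 .
\end{equation*}
Adding $\|(I-J)x_1-(I-J)x_2\|^2$ to both sides yields firm nonexpansiveness of $I-J$. Any firmly nonexpansive map is monotone, since the left side of \eqref{eq:P_F_firmly_nonexp} is nonnegative, and it is continuous because it is $1$-Lipschitz. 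An everywhere defined, continuous, monotone operator is maximally monotone; this is the same fact \cite[Cor.~25.5]{BausComb2011} used in the proof of Theorem~\ref{thm:homogeneous_Cauchy}, applied with the zero operator. This proves (i). Since $\calM_\gamma=\frac1\gamma(I-J_{\gamma\calM})$ is a positive multiple of a maximally monotone operator, (ii) follows. For (iii), firm nonexpansiveness of $I-J$ gives
\begin{equation*}
\langle \calM_\gamma x_1-\calM_\gamma x_2,x_1-x_2\rangle\ge\gamma\|\calM_\gamma x_1-\calM_\gamma x_2\|^2 ,
\end{equation*}
i.e.\ $\calM_\gamma$ is $\gamma$-cocoercive. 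Hence it is firmly nonexpansive in the literal sense when $\gamma\ge1$, and for general $\gamma$ the statement should be read as $\gamma\calM_\gamma$ firmly nonexpansive, which is what \cite[Cor.~23.11]{BausComb2011} asserts. Making this normalization precise is the only delicate point.

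Finally, for (iv): $\calM_\gamma w=0$ holds iff $J_{\gamma\calM}w=w$, and $J_{\gamma\calM}w=w$ holds iff $w\in w+\gamma\calM(w)$, i.e.\ iff $0\in\calM(w)$. This chain gives $\zer\calM=\zer{\calM_\gamma}=\operatorname{Fix}(J_{\gamma\calM})$. The main obstacle is really only everywhere-definedness of the resolvent, which rests on Minty's theorem; we take that from the footnote to Definition~\ref{def:monotonicity} rather than reprove it.
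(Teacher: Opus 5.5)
Your proof is correct. It differs from the paper's only in that the paper's proof is a pure citation: \cite[Cor.~23.11]{BausComb2011} for (i)--(iii) and \cite[Prop.~23.38]{BausComb2011} for (iv).

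You instead derive everything from two ingredients:
\begin{enumerate}[label=(\alph*)]
\item Minty's theorem, which is the footnote to Definition~\ref{def:monotonicity};
\item the inequality $\langle x_1-x_2,p_1-p_2\rangle\ge\|p_1-p_2\|^2$ for $p_i=J_{\gamma\calM}x_i$.
\end{enumerate}
This is essentially the textbook argument behind the cited results. Your one further external input, that a continuous, monotone, everywhere defined operator is maximally monotone, is legitimately obtained from \cite[Cor.~25.5]{BausComb2011} applied with the zero operator.

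The self-contained route has a real payoff: it exposes a defect in the statement. Your computation gives $\gamma$-cocoercivity of $\calM_\gamma$, which is what the cited corollary actually asserts. You should say more firmly that (iii) as written is false for $\gamma<1$, rather than calling it a matter of normalization. For example, take $\calM=\partial i_{\{0\}}$ on $\R$. Then $J_{\gamma\calM}\equiv 0$ and $\calM_\gamma=\gamma^{-1}I$, which for $\gamma<1$ is not even nonexpansive.

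The correct form of (iii) is that $\gamma\calM_\gamma=I-J_{\gamma\calM}$ is firmly nonexpansive, and this is already contained in (i). The later results of Section~\ref{sec:opt} use only the definition of the resolvent, so the misstatement does not affect the rest of the paper.
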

\begin{proof}
The first three claims (i)--(iii) were proven in \cite[Cor. 23.11]{BausComb2011}. The claim (iv) follows from \cite[Prop. 23.38]{BausComb2011}.
\end{proof}

\subsection{Proximal-gradient and forward-backward algorithm}

Let $\varphi_t: W \to W$ denote the flow associated with the differential inclusion \eqref{eq:gradient_flow_for_os}, precisely $\varphi_t(w_0)=S_{\calM}(t)w_0$. Accordingly, let $\varphi_t^i: W \to W$ be the flows associated with the subproblems 
\begin{equation}
     \varepsilon \frac{\mathrm{d}}{\mathrm{d}t} 
        w_1(t) = -
 \calA  w_1(t)  -  b , \qquad   \varepsilon \frac{\mathrm{d}}{\mathrm{d}t} 
        w_2(t) \in -
 \calS  w_2(t) ,
\end{equation}
respectively. Now, the Lie--Trotter scheme approximates the original flow by the composition
\begin{equation*}
    \varphi_h \approx \varphi_h^2 \circ \varphi_h^1.
\end{equation*}
Recall the numerical approximations of the subflows $\varphi_t^i$ given in \eqref{eq:expl_euler}, \eqref{eq:impl_euler}. An explicit Euler step in the first subproblem and an implicit Euler step in the second subproblem is rewritten by the update rule
\begin{equation}\label{eq:phi_h_star}
    \phi_h^{3*} = \psi_h^{2i} \circ \psi_h^{1e}.
\end{equation}
For further combinations of first-order schemes, we refer to \cite{blanes2024splitting}. If we treat $\calM$ as the sum of the two monotone operator $\calA+b$ and $\calS$, then we may apply the \textit{forward-backward algorithm}
\begin{equation*}
     w_{n+1}=J_{\gamma \calS} (w_n- \gamma \calA(w_n) - \gamma b), \qquad  n \in \N
\end{equation*}
see also \cite[Thm. 26.14]{BausComb2011}. The subsequent theorem provides the relation between time discretization schemes for the subflows $\varphi_t^i$ and numerical algorithms for the zeros of sums of maximally monotone operators.
\begin{theorem}
Set step-size $h > 0$ and a time-scaling $\varepsilon > 0$. Let $g$ be a proper convex l.s.c.\ function and $\calS = \partial g$. Then, 
\begin{equation*}
    \phi_h^{3*} (w) = J_{h/\varepsilon \calS}(w-h/\varepsilon \calA(w) - h/\varepsilon b) = \prox_{h/\varepsilon g} (w- h/\varepsilon \calA(w) - h/\varepsilon b)
\end{equation*}
for all $w \in W$. Hence, for an initial value $w_0 \in W$, the sequences $(w_n)_{n \in \N}$ produced by the update rules
  \begin{enumerate}[label=(\roman*)]
      \item Explicit/-implicit Euler for optimizer system: $w_{n+1}=\phi_h^{3*}(w_n)$;
      \item Forward-backward for $(\calA+b)+\calS$: $w_{n+1}=J_{h/\varepsilon \calS}(w_n-h/\varepsilon \calA(w_n) - h/\varepsilon b)$;
      \item Proximal gradient: $w_{n+1}=\prox_{h/\varepsilon g} (w_n- h/\varepsilon \calA(w_n) - h/\varepsilon b)$,
  \end{enumerate}
    coincide. 
\end{theorem}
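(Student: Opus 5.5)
The plan is a direct computation: unfold the two Euler steps in the composition \eqref{eq:phi_h_star}, taking the time scaling $\varepsilon$ into account. The two subflows are
\[
\tfrac{\mathrm{d}}{\mathrm{d}t} w_1 = -\tfrac{1}{\varepsilon}(\calA w_1 + b), \qquad \tfrac{\mathrm{d}}{\mathrm{d}t} w_2 \in -\tfrac{1}{\varepsilon}\calS w_2 .
\]
So $F_1 = -\tfrac1\varepsilon(\calA+b)$ and $F_2 = -\tfrac1\varepsilon \calS$ in the notation of \eqref{eq:expl_euler}, \eqref{eq:impl_euler}.

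\textbf{Explicit step.} By \eqref{eq:expl_euler},
\[
\psi_h^{1e}(w) = w - \tfrac{h}{\varepsilon}\calA(w) - \tfrac{h}{\varepsilon} b .
\]
This requires $w \in \dom{\calA}$. For general $w\in W$ the formula is interpreted where $\calA(w)$ is defined, and I would note this implicit domain restriction.

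\textbf{Implicit step.} By \eqref{eq:impl_euler}, $v_+ = \psi_h^{2i}(v)$ is characterized by the inclusion
\[
v_+ \in v - \tfrac{h}{\varepsilon}\calS(v_+), \qquad \text{equivalently} \qquad v \in (I + \tfrac{h}{\varepsilon}\calS)(v_+).
\]
Since $\calS = \partial g$ with $g$ proper, convex and l.s.c., $\calS$ is maximally monotone. By part (i) of the preceding proposition (\cite[Cor. 23.11]{BausComb2011}), the resolvent $J_{h/\varepsilon\,\calS}$ is then single-valued and defined on all of $W$. Hence the implicit Euler step is well defined and unique, with $\psi_h^{2i} = J_{h/\varepsilon\,\calS} = \prox_{h/\varepsilon\, g}$. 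The second equality is just the definition of $\prox$.

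This is the only step needing justification beyond bookkeeping, namely single-valuedness and full domain of the resolvent. That is exactly what maximal monotonicity provides, via Minty's theorem in the form of the footnote to Definition~\ref{def:monotonicity}.

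\textbf{Composition and the three sequences.} Composing the two steps gives
\[
\phi_h^{3*}(w) = J_{h/\varepsilon\,\calS}\bigl(w - \tfrac h\varepsilon \calA(w) - \tfrac h\varepsilon b\bigr) = \prox_{h/\varepsilon\, g}\bigl(w - \tfrac h\varepsilon \calA(w) - \tfrac h\varepsilon b\bigr).
\]
The three update rules (i)--(iii) therefore define the same map $W \to W$. It is the forward-backward map with step size $\gamma = h/\varepsilon$. By induction on $n$, starting from the same $w_0$, the generated sequences coincide.
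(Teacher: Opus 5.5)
Your proposal is correct and matches the paper's proof essentially line by line: you unfold $\psi_h^{1e}$ as the forward step, identify $\psi_h^{2i}$ with $(I+\tfrac{h}{\varepsilon}\calS)^{-1}=J_{h/\varepsilon\,\calS}=\prox_{h/\varepsilon\, g}$, and compose. Your two additions are accurate refinements that the paper leaves implicit: maximal monotonicity of $\partial g$ makes the implicit step single-valued and everywhere defined, and the forward step tacitly requires $w\in\dom{\calA}$.
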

\begin{proof}
By definition, we obtain for each $n \in \N$:
    \begin{align*}
        \phi_h^{3*}(w_n) &= \psi_h^{2i} \circ \psi_h^{1e} (w_n) \\
        &= \psi_h^{2i} (w_n- h/\varepsilon \calA(w_n) - h/\varepsilon b) \\
        &= (I+h/\varepsilon \calS)^{-1 } (w_n- h/\varepsilon \calA(w_n) - h/\varepsilon b) \\
        &= J_{h/\varepsilon \calS} (w_n- h/\varepsilon \calA(w_n) - h/\varepsilon b) \\
        &= \prox_{h/\varepsilon g}(w_n- h/\varepsilon \calA(w_n) - h/\varepsilon b), \\
        %&= F_{h/\varepsilon}(w_n),
    \end{align*}
    which proves the claim.
\end{proof}
\subsection{Peaceman-Rachford algorithm}
Let $\calM: W \rightrightarrows W$ be a possibly set-valued operator and $\gamma > 0$. Define its \textit{reflected resolvent at $\gamma$} by 
\begin{equation*}
    R_{\gamma \calM} \coloneqq 2J_{\gamma \calM} - I.
\end{equation*}
The \textit{Peaceman-Rachford algorithm}~\cite{peaceman1955numerical} for the sum of the two monotone operators $\calS$ and $\calA + b$ is then given by the update rule
\begin{equation*}
    w_{n+1} =( R_{\gamma \calS} \circ R_{\gamma (\calA +  b)})(w_n), \qquad n \in \N,
\end{equation*}
see also \cite[Prop. 26.13]{BausComb2011}. Convergence results for this algorithm in the case of set-valued operators $\calA$ and $\calS$ were given in \cite{lions1979splitting}. The subsequent theorem gives the correspondence between the Peaceman-Rachford algorithm for the sum $\calS + \calA +b$ and the Lie--Trotter splitting where each subflow is numerically approximated via the implicit midpoint rule.
\begin{theorem}
Set step-size $h > 0$ and a time-scaling $\varepsilon > 0$. Let $g$ be a proper convex l.s.c.\ function and $\calS = \partial g$. Then, 
\begin{equation*}
    (\psi_h^{2e} \circ \psi_h^{2i} \circ \psi_h^{1e} \circ \psi_h^{1i}) (w) = (R_{\gamma \calS} \circ R_{\gamma (\calA +  b)})(w)
\end{equation*}
for all $w \in W$. Hence, for an initial value $w_0 \in W$, the sequences $(w_n)_{n \in \N}$ produced by the update rules
  \begin{enumerate}[label=(\roman*)]
      \item $w_{n+1}=(\psi_h^{2e} \circ \psi_h^{2i} \circ \psi_h^{1e} \circ \psi_h^{1i})(w_n) $;
      \item $w_{n+1}=(R_{\gamma \calS} \circ R_{\gamma (\calA +  b)})(w_n)$,
  \end{enumerate}
  coincide.
\end{theorem}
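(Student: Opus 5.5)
The plan is to verify the identity one subflow at a time: for each of the two operators, an implicit Euler step followed by an explicit Euler step of the same size, both taken from the same base point, equals the reflected resolvent of that operator. This is the classical observation that the implicit midpoint (Crank--Nicolson) rule for a monotone flow is a Cayley transform. Throughout I fix the step parameter as $\gamma = \tfrac{h}{2\varepsilon}$, i.e.\ each Euler half step has size $h/2$ for the flows $\varepsilon \dot w \in -(\calA+b)(w)$ and $\varepsilon\dot w \in -\calS(w)$; the factor $1/\varepsilon$ is absorbed into $\gamma$. The statement leaves the relation between $\gamma$ and $h,\varepsilon$ implicit, so I would make it explicit at the start (if the intended convention is full steps of size $h$, then $\gamma = h/\varepsilon$, and the argument is unchanged).

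\textbf{Step 1 (first subflow).} Let $w \in W$ and set $u \coloneqq \psi^{1i}(w)$. By \eqref{eq:impl_euler}, $u$ solves $u = w - \gamma(\calA u + b)$, i.e.\ $u = J_{\gamma(\calA+b)} w$. This resolvent is single-valued and everywhere defined: $\calA$ is maximally monotone and adding the constant $b$ preserves this, so Minty's theorem (the footnote to Definition~\ref{def:monotonicity}) applies. The key identity is
\[
\gamma(\calA u + b) = w - u .
\]
The explicit step $\psi^{1e}$ taken from $u$ is $u - \gamma(\calA u + b)$, which by this identity equals $u - (w-u) = 2J_{\gamma(\calA+b)}w - w = R_{\gamma(\calA+b)} w$. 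Note that $u \in \dom{\calA}$ automatically, so evaluating $\calA u$ is legitimate even though $\calA$ is unbounded.

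\textbf{Step 2 (second subflow).} Repeat the computation with $y \coloneqq R_{\gamma(\calA+b)}w$ and $\calS = \partial g$. The implicit step gives $J_{\gamma\calS} y = \prox_{\gamma g} y$, which is well defined by the proposition quoted from \cite[Cor.~23.11]{BausComb2011}. The explicit step from $q \coloneqq J_{\gamma\calS}y$ then gives $q - \gamma s$ with $s \in \calS(q)$, hence $2J_{\gamma\calS}y - y = R_{\gamma\calS}y$. Composing Steps~1 and~2 yields the operator identity, and the equality of the two sequences in (i) and (ii) follows by induction on $n$.

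\textbf{Main obstacle.} The delicate point is Step~2, because $\calS$ is set-valued. The explicit Euler step $q - \gamma s$ is only defined once a particular element $s\in\calS(q)$ is chosen, and an arbitrary selection does not give the reflected resolvent. I would resolve this by interpreting the explicit step in the composition as the step driven by the element $s = (y-q)/\gamma$, which is exactly the element certified by the preceding implicit step: since $y \in q + \gamma\calS(q)$, this $s$ indeed lies in $\calS(q)$. In other words, $\psi^{2e}$ is understood as using the selection produced by $\psi^{2i}$, i.e.\ the Yosida approximation, since $s = \calS_\gamma(y)$. I would state this convention explicitly as part of the proof, noting that it is the natural reading of an implicit midpoint step for an inclusion. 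For the single-valued $\calA$ no such issue arises. Everything else is the routine algebra sketched above.
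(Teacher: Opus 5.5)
Your proof is correct and is essentially the paper's argument: the paper also obtains $(\psi_h^{1e}\circ\psi_h^{1i})(w)=R_{\gamma(\calA+b)}(w)$ from $\gamma(\calA+b)J_{\gamma(\calA+b)}w = w - J_{\gamma(\calA+b)}w$ and then repeats the computation for $\calS$. It uses $\gamma = h/\varepsilon$, which is the reading forced by the step size $h$ in $\psi_h$, i.e.\ your fallback convention rather than $h/(2\varepsilon)$. Your remark that the explicit step for the set-valued $\calS$ must use the selection $\calS_\gamma(y)\in\calS(J_{\gamma\calS}y)$ makes explicit a convention that the paper's one-line instruction to \emph{replace $\calA+b$ by $\calS$} tacitly needs, so it is a welcome clarification rather than a different route.
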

\begin{proof}
    Observe that for all $w \in W$: 
    \begin{align*}
         (\psi_h^{1e} \circ \psi_h^{1i})(w)
         &= \psi_h^{1e} (J_{h/\varepsilon(\calA + b)}(w)) \\
         &= (I-h/\varepsilon(\calA +b))(J_{h/\varepsilon(\calA + b)}(w)) \\
         &= (I+h/\varepsilon(\calA +b)-2h/\varepsilon(\calA +b))(J_{h/\varepsilon(\calA + b)}(w)) \\
         &= I -2(I-J_{h/\varepsilon(\calA+b)}) \\
         & =R_{h/\varepsilon(\calA+b)}
    \end{align*}
    and replacing $(\calA+b)$ by $\calS$ in the above computation yields 
     \begin{align*}
         (\psi_h^{2e} \circ \psi_h^{2i})(w)
         & =R_{h/\varepsilon \calS}(w).
    \end{align*}
    This proves the assertion.
\end{proof}

\section{Conclusion}\label{sec13}

We introduced a framework for suboptimal MPC based on the interconnection of the plant and the  optimizer dynamics, formulated via (quasi-)monotone operators. This viewpoint yields a well-posed continuous-time closed-loop system and provides a unified interpretation of suboptimal MPC schemes. In particular, we showed that such algorithms can be understood as Lie--Trotter splitting methods applied to the coupled dynamics. This connection links real-time optimization with operator splitting and offers a foundation for the systematic design of efficient MPC schemes.

\bibliographystyle{abbrv}

% Loading bibliography database
\bibliography{sn-bibliography}

\newpage

\end{document}